\newtheorem{thm}{Theorem}
\newtheorem{lemma}{Lemma}[section]
\newtheorem{prop}[lemma]{Proposition}
\newtheorem{rem}[lemma]{Remark}
\newtheorem{example}[lemma]{Example}
\newtheorem{definition}[lemma]{Definition}
\newtheorem{notation}[lemma]{Notation}
\newproof{proof}{Proof}
\newproof{pot1}{Proof of Theorem~\ref{thmtwoprodP0}}
\newproof{pot2}{Proof of Theorem~\ref{thm_main_forest}}
\newproof{pot3}{Proof of Theorem~\ref{thm_main_caterpillar}}
\newcommand{\bunderline}[1]{\mkern2mu\underline{\mkern-2mu#1\mkern-4mu}\mkern4mu }
\def\myd#1{\bunderline{#1}}
\begin{document}
\begin{frontmatter}

\title{Some results on the structure and spectra of matrix-products}

\author[ref1]{Murad Banaji\corref{cor1}}
\author[ref2]{Carrie Rutherford}

\address[ref1]{Department of Mathematics, University of Portsmouth, Lion Gate Building, Lion Terrace, Portsmouth, Hampshire PO1 3HF, UK.}
\address[ref2]{Department of Business Studies, London South Bank University, 103 Borough Road, London SE1 0AA, UK}

\cortext[cor1]{Corresponding author: murad.banaji@port.ac.uk. }

\begin{abstract}
We consider certain matrix-products where successive matrices in the product belong alternately to a particular qualitative class or its transpose. The main theorems relate structural and spectral properties of these matrix-products to the structure of underlying bipartite graphs. One consequence is a characterisation of caterpillars: a graph is a caterpillar if and only if all matrix-products associated with it have real nonnegative spectrum. Several other equivalences of this kind are proved. The work is inspired by certain questions in dynamical systems where such products arise naturally as Jacobian matrices, and the results have implications for the existence and stability of equilibria in these systems. 
\end{abstract}

\begin{keyword}
trees \sep caterpillars \sep $P$-matrices \sep matrix spectra

\MSC 05C50 \sep 05C22 \sep 15A18 \sep 15B35
\end{keyword}

\end{frontmatter}

\section{Introduction and statement of the main results}

The question of how the structure of a matrix in a combinatorial sense relates to its linear algebraic properties has been intensively studied, particularly in the context of sign nonsingularity (\cite{Thomassen1,LundyMaybeeBuskirk,shao,RobertsonSeymourThomas} to name just a few examples), but also of other questions in linear algebra, both spectral and nonspectral (\cite{maybee,ZeilbergerDiscMath,barkerTam1992} for example). Here we explore how the combinatorial structure of a real matrix $A$, not necessarily square, as encoded in its bipartite graph, relates to properties of matrix-products where successive matrices in the product belong alternately to the qualitative class of $A$ or its transpose.

The results are inspired partly by the study of chemical reaction networks, namely dynamical systems describing the evolution of chemical species undergoing a set of reactions. In this setting the matrices studied are Jacobian matrices, and the bipartite graph from which one wishes to draw conclusions is a natural representation of the chemical system, often termed the ``species-reaction graph'' or ``SR-graph'' \cite{craciun1,banajicraciun}. Under weak assumptions, systems of chemical reactions have Jacobian matrices which factorise as $-AB^{\mathrm{t}}$ where $A$ and $B$ are matrices such that $B$ lies in the closure of the qualitative class of~$A$. The study of these systems thus naturally raises general questions about what can be said about matrix-products where alternate factors belong either to some qualitative class or its transpose.

The main results, Theorems~\ref{thmtwoprodP0}~to~\ref{thm_main_caterpillar}, are easy to state after some definitions. 

\begin{definition}[{\bf Sign-pattern, qualitative class}]
Given $A \in \mathbb{R}^{n \times m}$ define $\mathrm{sign}\,A\in \mathbb{R}^{n \times m}$, the sign-pattern of $A$, as the $(0,1,-1)$-matrix whose entries have the same signs as the entries of $A$; the qualitative class of $A$ is the set of matrices with the same sign-pattern as $A$, i.e., $\mathcal{Q}(A) = \{B \in \mathbb{R}^{n \times m}\colon \mathrm{sign}\,B = \mathrm{sign}\,A\}$. Also useful is $\mathcal{Q}_0(A)$, the topological closure of $\mathcal{Q}(A)$, regarded as a subset of $\mathbb{R}^{n \times m}$. 
\end{definition}
\begin{definition}
Given $A \in \mathbb{R}^{n \times m}$, define
\[
\mathcal{Q}^{k}(A) = \{A_1A^{\mathrm{t}}_2A_3\cdots A^{\mathrm{t}}_{k}\colon A_i \in \mathcal{Q}(A), \,\, i = 1, \ldots, k\}\,,
\]
\[
\mathcal{Q}_0^{k}(A) = \{A_1A^{\mathrm{t}}_2A_3\cdots A^{\mathrm{t}}_{k}\colon A_i \in \mathcal{Q}_0(A), \,\, i = 1, \ldots, k\}\,.
\]
Note that $\mathcal{Q}^{k}(A)\subseteq \mathcal{Q}_0^{k}(A) \subseteq \mathrm{cl}(\mathcal{Q}^{k}(A))$ where $\mathrm{cl}(\mathcal{Q}^{k}(A))$ is the closure of $\mathcal{Q}^{k}(A)$.
\end{definition}
\begin{definition}[{\bf $P_0$-matrices}]
A real square matrix is a $P_0$-matrix (resp. $P$-matrix) if all of its principal minors are nonnegative (resp. positive). We write $\mathbf{P_0}$ for the set of $P_0$-matrices. 
\end{definition}
\begin{definition}[{\bf Matrices with nonnegative real eigenvalues}]
We write $\mathbf{PS}$ for the set of real matrices with real nonnegative spectrum. (Real) positive semidefinite matrices are the symmetric elements of $\mathbf{PS}$.
\end{definition}
\begin{definition}[{\bf Forest, caterpillar forest}] A forest is an acyclic graph; a {\bf caterpillar} is a tree which becomes a path on removal of its leaves; a caterpillar forest is a forest each of whose connected components is a caterpillar. 
\end{definition}

\begin{definition}[{\bf Bipartite graph/weighted bipartite graph of a matrix}]
\label{defbipartite}
Given $A \in \mathbb{R}^{n \times m}$ define $\Gamma_A$, the bipartite graph of $A$, as follows: $A$ is a graph on $n + m$ vertices with bipartition $V(\Gamma_A) = \{X_1, \ldots, X_n\} \cup \{Y_1, \ldots, Y_m\}$, and with edge $X_iY_j$ if and only if $A_{ij} \neq 0$. The weighted bipartite graph of $A$ is $\myd{\Gamma}_A = (\Gamma_A, w)$, where $w:E(\Gamma_A) \to \{0,1\}$ is defined via 
\[
w(X_iY_j) = \left\{\begin{array}{ll}1 & \mbox{ if }\, A_{ij} < 0\,,\\0 & \mbox{ if }\, A_{ij} > 0\,.\end{array}\right.
\]
\end{definition}
\begin{rem}
Weighted graphs will always be depicted underlined to make it clear which results are independent of edge weightings. 
\end{rem}

A weighted graph $\myd{\Gamma}_A$ is ``$2$-odd'' if the difference between the total number of $1$-weighted and $0$-weighted edges in each of its cycles equals $2$ modulo $4$. The main theorems are:
\begin{thm}
\label{thmtwoprodP0}
$\myd{\Gamma}_A$ is 2-odd $\Leftrightarrow$ $\mathcal{Q}^{2}(A) \subseteq \mathbf{P_0}$.
\end{thm}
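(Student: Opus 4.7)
The plan is to reduce the principal-minor condition on $A_1 A_2^{\mathrm{t}}$ to a sign-definiteness condition on subminors of $A$ via the Cauchy-Binet formula. For any $S \subseteq \{1, \ldots, n\}$ with $|S| = k$ and any $A_1, A_2 \in \mathcal{Q}(A)$, one has $(A_1 A_2^{\mathrm{t}})[S|S] = A_1[S|\ast]\,A_2[S|\ast]^{\mathrm{t}}$, and Cauchy-Binet gives
\[
\det\bigl((A_1A_2^{\mathrm{t}})[S|S]\bigr) \;=\; \sum_{|T|=k} \det(A_1[S|T])\,\det(A_2[S|T]).
\]
Since $A_1, A_2$ vary independently over $\mathcal{Q}(A)$, nonnegativity of this sum for every $S$ and every choice of $A_1, A_2$ should be equivalent to each submatrix $A[S|T]$ having a \emph{sign-definite determinant}, meaning every nonzero term in its Laplace expansion carries a common sign.

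I would then establish the key lemma that $\myd{\Gamma}_A$ is 2-odd if and only if every square submatrix of $A$ has a sign-definite determinant. Two perfect matchings $M_1, M_2$ of the bipartite graph of such a submatrix differ by a disjoint union of even cycles of $\myd{\Gamma}_A$; for a single such cycle of length $2r$ with $\#1$ weight-1 edges, the two induced matchings have entry-product-sign ratio $(-1)^{\#1}$ and permutation-parity ratio $(-1)^{r-1}$, so their signed contributions to the determinant agree exactly when $r + \#1$ is odd. A short count shows this is equivalent to $\#1 - \#0 \equiv 2 \pmod 4$ on a length-$2r$ cycle, i.e.\ to the 2-odd condition. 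Induction on the number of cycles in the symmetric difference $M_1 \triangle M_2$ gives the full equivalence. The forward direction of the theorem is then immediate: when $\myd{\Gamma}_A$ is 2-odd, each $\det(A_r[S|T])$ has a sign depending only on the pattern of $A$, so the two factors in every Cauchy-Binet summand share a sign (or one vanishes) and the sum is nonnegative.

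For the converse I would argue by contraposition: take a cycle $C$ of $\myd{\Gamma}_A$ that is not 2-odd, let $S$ and $T$ be its sets of $X$- and $Y$-vertices, and build $A_1, A_2 \in \mathcal{Q}(A)$ so that the $(S,S)$-minor of $A_1A_2^{\mathrm{t}}$ is negative. Concretely, I would assign one alternating cycle-matching of $C$ magnitude $1$ in $A_1$ and magnitude $\delta$ in $A_2$, with the roles reversed on the other matching, and scale all ``chord'' entries of $A[S|T]$ together with all entries in columns outside $T$ to have magnitude $\varepsilon$, in each case respecting the prescribed sign pattern. A leading-order expansion shows that $\det(A_r[S|T])$ is dominated by its selected cycle-matching, that $\det(A_1[S|T])$ and $\det(A_2[S|T])$ come with opposite signs by the sign lemma applied in the failing direction, and that every Cauchy-Binet summand with $T' \neq T$ is of order $\varepsilon^2$. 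The main obstacle is precisely this scale bookkeeping in the converse: the magnitudes $1, \delta, \varepsilon$ must be balanced so that the intended cycle-matching dominates $\det(A_r[S|T])$ without spurious cancellations and without leaving $\mathcal{Q}(A)$ for its closure, while simultaneously keeping the non-$T$ Cauchy-Binet terms strictly subleading; once this is done carefully, for $\varepsilon, \delta$ sufficiently small the $(S,S)$-minor is negative, completing the contrapositive.
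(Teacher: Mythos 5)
Your proposal is correct, but it takes a genuinely different route from the paper. The paper deduces Theorem~\ref{thmtwoprodP0} from its block-circulant digraph machinery: Proposition~\ref{propnotP0notPS} (a specialisation of Theorem~\ref{main}, itself quoted from \cite{banajirutherford1}) gives $\mathcal{Q}^2(A)\subseteq\mathbf{P_0}$ iff $\myd{G}_{AA^{\mathrm{t}}}$ is $2$-odd, and Proposition~\ref{kgoodor2kodd} with $k=1$ plus Remark~\ref{rem1rep} (the only closed walks failing to be $2$-repeating are cycles and walks $(uvu)$) converts this into $2$-oddness of $\myd{\Gamma}_A$. You instead argue directly via Cauchy--Binet and the characterisation of $2$-oddness by signed minors, in effect reproving Theorem~11 of \cite{banajicraciun}, which the paper only cites (Remark~\ref{remcompound}); the paper itself notes the theorem is ``easily inferred from Corollary~13 of \cite{banajicraciun}'', and your argument is essentially that inference made self-contained. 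Your parity bookkeeping is right: on a cycle of length $2r$ with $\#1$ weight-one edges, the two induced diagonal terms agree in sign iff $(-1)^{\#1}(-1)^{r-1}=1$, i.e.\ iff $r+\#1$ is odd, which is exactly $w_2=1$, equivalently $\#1-\#0\equiv 2\pmod 4$; factorising over the disjoint cycles of $M_1\triangle M_2$ yields sign-definiteness of every minor, so each Cauchy--Binet summand $A_1[S|T]\,A_2[S|T]$ is nonnegative. The converse construction also works, and the $\delta$-asymmetry between $A_1$ and $A_2$ is precisely what is needed: it stops the two cycle matchings cancelling \emph{within} a single determinant, so $A_1[S|T]\to\sigma_1$ and $A_2[S|T]\to\sigma_2$ with $\sigma_1\sigma_2=-1$ on a $2$-even cycle, while every summand with $T'\neq T$ picks up at least one $\varepsilon$ in each factor and is $O(\varepsilon^2)$; the scale bookkeeping you flag as the main obstacle does close without trouble. (The paper's Theorem~\ref{main} handles this direction differently, by zeroing all entries off the cycle and passing to $\mathcal{Q}_0$; your concern about remaining in $\mathcal{Q}(A)$ rather than its closure is in any case moot by Remark~\ref{remP0PSclosed}, since a negative principal minor is an open condition.) What the paper's route buys is reuse --- the same walk/projection machinery then powers Theorems~\ref{thm_main_forest} and~\ref{thm_main_caterpillar}; what yours buys is a self-contained, purely linear-algebraic proof of this one theorem that needs no block-circulant digraph at all.
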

\begin{thm}
\label{thm_main_forest}
$\Gamma_A$ is a forest $\Leftrightarrow$ $\mathcal{Q}^4(A) \subseteq \mathbf{P_0}$ $\Leftrightarrow$ $\mathcal{Q}^{2}(A) \subseteq \mathbf{PS}$. 
\end{thm}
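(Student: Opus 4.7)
The plan is to prove the equivalences by establishing $(1)\Rightarrow(3)$, $(3)\Rightarrow(1)$, $(1)\Rightarrow(2)$, and $(2)\Rightarrow(1)$, where (1)--(3) denote the three conditions in the statement. The engine for the forward directions is a rescaling lemma: if $\Gamma_A$ is a forest then for every $B\in\mathcal{Q}(A)$ there exist positive diagonal matrices $D_1,D_2$ with $D_1 B D_2 = S := \mathrm{sign}(A)$. One proves this component-by-component, rooting each tree and propagating scaling factors outward along edges so that the rescaled entry on each edge becomes its sign; acyclicity prevents inconsistencies.

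For $(1)\Rightarrow(3)$, write $A_i=(D_1^{(i)})^{-1} S (D_2^{(i)})^{-1}$. A short computation gives
\[
A_1 A_2^t = (D_1^{(1)})^{-1} S \Delta S^t (D_1^{(2)})^{-1}, \qquad \Delta=(D_2^{(1)} D_2^{(2)})^{-1}.
\]
Conjugating by the positive diagonal matrix $F=\bigl(D_1^{(1)} (D_1^{(2)})^{-1}\bigr)^{1/2}$ transforms $A_1 A_2^t$ into $(D_1^{(1)} D_1^{(2)})^{-1/2} S \Delta S^t (D_1^{(1)} D_1^{(2)})^{-1/2}$, which is manifestly symmetric positive semidefinite. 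Since similarity preserves the spectrum, $A_1 A_2^t\in\mathbf{PS}$.

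For $(1)\Rightarrow(2)$, the same rescaling writes $A_1 A_2^t A_3 A_4^t$ as $D_L\cdot(S \Delta_1 S^t \Delta_2 S \Delta_3 S^t)\cdot D_R$ with positive diagonal $D_L, D_R, \Delta_k$; pre- and post-multiplication by positive diagonals scales each principal minor by a positive factor, so it suffices to show the middle matrix is in $\mathbf{P_0}$. A further $\pm 1$ diagonal row/column signing, obtained by a propagation argument analogous to the one above, converts $S$ into the $0$-$1$ bipartite incidence matrix $\hat S$, reducing the problem to showing $\hat S \Delta_1 \hat S^t \Delta_2 \hat S \Delta_3 \hat S^t \in\mathbf{P_0}$. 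Expanding its principal minor on $I$ by repeated Cauchy--Binet produces a sum of terms of the form $\det(\hat S_{I,J})\det(\hat S_{L,J})\det(\hat S_{L,M})\det(\hat S_{I,M})$ weighted by products of $\Delta$-entries; in a forest, each such $\det(\hat S_{P,Q})$ is $\pm1$ or $0$ according to whether the induced subforest on $P\cup Q$ carries a (necessarily unique) perfect matching. Showing this signed sum is always nonnegative is the main technical step I anticipate, and I would attempt either a combinatorial pairing of matching tuples or an induction peeling a leaf from the forest.

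For the reverse implications, suppose $\Gamma_A$ contains a cycle on vertices $I \cup J$. Using closedness of $\mathbf{P_0}$ and $\mathbf{PS}$ one may work in the limit within $\mathcal{Q}_0^k(A)$, letting off-cycle entries tend to $0$; the $I\times I$ block of $A_1 A_2^t$ (resp.\ $A_1 A_2^t A_3 A_4^t$) then tends to a product of $I\times J$ cycle-submatrices, while entries outside this block vanish. By tuning the magnitudes of on-cycle entries of the various $A_k$ one can construct $A_1, A_2 \in \mathcal{Q}(A)$ making $A_1 A_2^t$ have a negative real eigenvalue (refuting (3)), and $A_1,\ldots,A_4\in\mathcal{Q}(A)$ making some diagonal entry of $A_1 A_2^t A_3 A_4^t$ negative (refuting (2)). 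The obstructions persist under small perturbation back into $\mathcal{Q}(A)$.
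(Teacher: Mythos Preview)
Your treatment of the $\mathbf{PS}$ equivalence is essentially the paper's: the rescaling lemma is Theorem~\ref{treequalc}, and your diagonal-similarity argument for $(1)\Rightarrow(3)$ matches the paper's. For $(3)\Rightarrow(1)$ the limiting construction is also the same in spirit, though note that the eigenvalues you produce are $r$th roots of $\pm 1$; for $r\ge 3$ these may be nonreal rather than negative real, which still suffices to refute membership in $\mathbf{PS}$.

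The two halves of the $\mathbf{P_0}$ equivalence both have genuine gaps. For $(2)\Rightarrow(1)$, forcing a \emph{diagonal entry} of $A_1A_2^{\mathrm t}A_3A_4^{\mathrm t}$ to be negative is impossible in general: if every nonzero entry of $A$ is positive (e.g.\ $\Gamma_A$ a $6$-cycle with all edge-weights $0$), then every entry of every such product is nonnegative, so all diagonal entries are nonnegative. One must target a higher-order principal minor. The paper does this via walks: starting from the cycle $L$ it inserts up to three backtracks to obtain a closed walk $L'$ whose length is a multiple of $4$ and whose $4$-weight is $0$; since $L'$ is not $4$-repeating, Proposition~\ref{kgoodor2kodd} gives $\mathcal{Q}^4(A)\not\subseteq\mathbf{P_0}$.

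For $(1)\Rightarrow(2)$, your reduction to the $0$--$1$ forest matrix $\hat S$ is correct but does not actually reduce the problem: the product $\hat S\Delta_1\hat S^{\mathrm t}\Delta_2\hat S\Delta_3\hat S^{\mathrm t}$ is, up to outer positive-diagonal scaling, a generic element of $\mathcal{Q}^4(\hat S)$, and $\Gamma_{\hat S}=\Gamma_A$. The Cauchy--Binet expansion does not factor as a square because the two halves carry different $\Delta$'s, so neither a pairing nor a naive leaf-peeling on minors is straightforward. The paper sidesteps the algebra entirely: it shows that forests are $4$-repeating (Proposition~\ref{forest4rep}), and since every closed walk on a tree has weight $0$, Proposition~\ref{P_0prods} then forces every cycle of the auxiliary digraph $\myd{G}_{(AA^{\mathrm t})^2}$ to have length exactly $4$ and be $4$-odd, giving $\mathcal{Q}^4(A)\subseteq\mathbf{P_0}$. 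This walk-combinatorics route is the key idea you are missing.
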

\begin{thm}
\label{thm_main_caterpillar}
$\Gamma_A$ is a caterpillar forest $\Leftrightarrow$ $\mathcal{Q}^6(A) \subseteq \mathbf{P_0}$ $\Leftrightarrow$ $\mathcal{Q}^4(A) \subseteq \mathbf{PS}$ $\Leftrightarrow$ $\mathcal{Q}^{2k}(A) \subseteq \mathbf{P_0}$ for all $k \in \mathbb{N}$ $\Leftrightarrow$ $\mathcal{Q}^{2k}(A) \subseteq \mathbf{PS}$ for all $k \in \mathbb{N}$.
\end{thm}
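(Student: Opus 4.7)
The plan is to establish the equivalences by a cycle of implications. Two of them are trivial: $(5)\Rightarrow(3)$ follows by taking $k = 2$, and $(4)\Rightarrow(2)$ by taking $k = 3$. The substantive work lies in the forward implications $(1)\Rightarrow(4)$ and $(1)\Rightarrow(5)$, and in the reverse implications $(2)\Rightarrow(1)$ and $(3)\Rightarrow(1)$. I would first reduce to the case in which $\Gamma_A$ is a connected caterpillar tree, since a disconnected $\Gamma_A$ makes $A$ block-diagonal after row/column permutation, and every product $A_1A_2^{\mathrm{t}}\cdots A_{2k}^{\mathrm{t}}$ inherits the corresponding block-diagonal form.

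For the forward direction I would exploit the fact, already at hand from Theorem~\ref{thm_main_forest}, that when $\Gamma_A$ is a tree every $A_i\in\mathcal{Q}_0(A)$ can be written as $A_i = D_iA_0E_i$ with positive diagonals $D_i,E_i$ and a fixed reference $A_0\in\mathcal{Q}(A)$. Substituting and collecting the diagonal factors gives
\[
A_1 A_2^{\mathrm{t}} \cdots A_{2k}^{\mathrm{t}} = D_1 \,(A_0 F_1 A_0^{\mathrm{t}})\, G_1 \,(A_0 F_2 A_0^{\mathrm{t}})\, G_2 \cdots G_{k-1}\, (A_0 F_k A_0^{\mathrm{t}})\, D_{2k}
\]
with $F_i,G_i$ positive diagonal; conjugation by $D_1$ then makes the spectrum coincide with that of a product $\prod_{i=1}^{k}(A_0F_iA_0^{\mathrm{t}})H_i$ in which each $A_0F_iA_0^{\mathrm{t}}$ is positive semidefinite and each $H_i$ is positive diagonal. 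For a generic tree this is not yet enough, but for a caterpillar I expect to finish by induction on the number of vertices, peeling leaves of $\Gamma_A$ one at a time and showing that the spectral and $P_0$ properties descend to and from a smaller caterpillar. The $P_0$ half of the conclusion would be treated by applying the same argument to principal submatrices $B[I,I] = A_1[I,:]A_2^{\mathrm{t}}\cdots A_{2k}[I,:]^{\mathrm{t}}$ and using Cauchy--Binet or a Schur complement reduction to express the determinant as a sum of nonnegative contributions. The base cases—the star (when $A$ is a single row) and the bare path (when $A$ is bidiagonal)—can be verified by direct computation.

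For the reverse direction I would argue by contrapositive. If $\Gamma_A$ is not a forest, Theorem~\ref{thm_main_forest} already supplies a product in $\mathcal{Q}^4(A)\setminus\mathbf{P_0}$ and one in $\mathcal{Q}^2(A)\setminus\mathbf{PS}$; either can be extended into $\mathcal{Q}^6(A)$ or $\mathcal{Q}^4(A)$ by right-multiplying by a suitable $AA^{\mathrm{t}}$ factor chosen to preserve the offending principal minor or non-real eigenvalue. The delicate case is when $\Gamma_A$ is a forest but not a caterpillar forest: then $\Gamma_A$ contains, as an induced subgraph, the unique smallest non-caterpillar tree $Y$, the spider with three legs of length two (on seven vertices). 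I would exhibit explicit matrices on $Y$ whose product $A_1A_2^{\mathrm{t}}A_3A_4^{\mathrm{t}}$ has a non-real eigenvalue and whose product $A_1\cdots A_6^{\mathrm{t}}$ has a negative principal minor, extending them to $\mathcal{Q}(A)$ on the full graph by padding with sufficiently small entries at the remaining edges and invoking continuity of eigenvalues and minors.

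The main obstacle will be the inductive step in the forward direction. Since the conclusion is known to fail for general non-caterpillar trees (this is precisely what the reverse direction produces), any proof must genuinely use the caterpillar hypothesis rather than the mere tree hypothesis. The key structural fact I would try to leverage is that in a caterpillar every leaf is attached to a spine vertex with at most two spine neighbours, so peeling off a leaf neither disconnects the spine nor creates a new branching point; translating this observation into control over the rank-one updates and diagonal conjugations that accompany leaf removal, and identifying the right inductive invariant, is where the bulk of the technical effort is likely to lie.
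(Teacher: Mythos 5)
Your reverse direction is broadly aligned with the paper in spirit, but both of your substantive steps contain genuine gaps. Take the forward direction first. Your reduction via the factorisation $A_i = D_iA_0E_i$ (which is indeed the paper's Theorem~\ref{treequalc}, and is how the paper proves $\mathcal{Q}^2(A)\subseteq\mathbf{PS}$ for forests) stalls at exactly the point you admit: a product $\prod_i (A_0F_iA_0^{\mathrm{t}})H_i$ of positive semidefinite matrices interleaved with positive diagonals has real nonnegative spectrum when there is one PSD factor, but not in general for two or more --- and this failure is not a technicality to be fixed by a cleverer induction, because the factorisation uses only that $\Gamma_A$ is a tree, and the statement being proved is \emph{false} for general trees ($T^*$ is a tree with $\mathcal{Q}^4 \not\subseteq \mathbf{PS}$ and $\mathcal{Q}^6\not\subseteq\mathbf{P_0}$). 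Any argument along your lines must inject the caterpillar hypothesis somewhere you have not located. The paper does this with two ideas absent from your proposal: (i) a purely combinatorial ``caterpillar intermediate value theorem'' (Theorem~\ref{catIVT}: every closed caterpillar walk of length $2r$ contains closed subwalks of every even length $2s \le 2r$), which shows caterpillar forests are $2k$-repeating for all $k$ and hence, via Proposition~\ref{P_0prods} and the signed-digraph criterion, that $\mathcal{Q}^{2k}(A)\subseteq\mathbf{P_0}$ for all $k$; and (ii) a spectral upgrade (Theorem~\ref{allpowersP_0}): if all powers land in $\mathbf{P_0}$, Kellogg's wedge theorem confines every eigenvalue to $\mathbb{C}_n'$, whose elements have argument a rational multiple of $2\pi$ (irrational angles have dense power-orbits that enter the excluded wedge); path-connectedness of $\mathcal{Q}^{2r}(A)$ and constancy of rank then force the nonzero spectrum to stay on the nonnegative real axis, since it cannot jump discontinuously through the forbidden irrational angles. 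Your Cauchy--Binet ``sum of nonnegative contributions'' idea for the $\mathbf{P_0}$ half fails for the same reason: it would apply to all trees, contradicting $\mathcal{Q}^6\not\subseteq\mathbf{P_0}$ for $T^*$.

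In the reverse direction, your $T^*$ step (explicit bad products on the seven-vertex spider, extended by zero/small padding and continuity) matches the paper's Example~\ref{exprod} together with Proposition~\ref{propnotP0PS} and is fine. But the non-forest case rests on an unjustified move: ``right-multiplying by a suitable $AA^{\mathrm{t}}$ factor chosen to preserve the offending principal minor or non-real eigenvalue.'' There is no reason an element of $\mathcal{Q}^2(A)$ can be chosen to act like a near-identity --- its sign pattern is dictated by $A$, and multiplication by it can destroy both the minor and the eigenvalue you want to preserve. The paper sidesteps matrix multiplication entirely here: to pass from $\mathcal{Q}^{2s}\not\subseteq\mathbf{P_0}$ to $\mathcal{Q}^{2(s+1)}\not\subseteq\mathbf{P_0}$ (Proposition~\ref{inductivefailP0}) it lengthens the offending closed walk on $\myd{\Gamma}_A$ by inserting doubled edges, which preserves both the non-repeating property and the weight (Proposition~\ref{propinherit}, Remark~\ref{reminherit}); and for the $\mathbf{PS}$ case it invokes the implication $\mathcal{Q}^{4}(A)\subseteq\mathbf{PS}\Rightarrow\mathcal{Q}^{4}(A)\subseteq\mathbf{P_0}$ (Proposition~\ref{propnotP0notPS}), so the forest obstruction from Theorem~\ref{thm_main_forest} applies directly with no perturbation needed. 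You should replace your extension trick with one of these mechanisms; as written, that step does not go through.
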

Theorem~\ref{thmtwoprodP0} is known but a brief proof comes naturally as a corollary of certain preliminary results needed for the proof of Theorems~\ref{thm_main_forest}~and~\ref{thm_main_caterpillar}. 

\begin{rem}
\label{remP0PSclosed}
It is clear that the sets $\mathbf{P_0}$ and $\mathbf{PS}$ are closed, namely a convergent sequence of matrices of some fixed dimension in $\mathbf{P_0}$ (resp. $\mathbf{PS}$) converges to a matrix in $\mathbf{P_0}$ (resp. $\mathbf{PS}$). Thus $\mathcal{Q}^{2k}(A) \subseteq \mathbf{P_0}$ if and only if $\mathcal{Q}_0^{2k}(A) \subseteq \mathbf{P_0}$, and $\mathcal{Q}^{2k}(A) \subseteq \mathbf{PS}$ if and only if $\mathcal{Q}_0^{2k}(A) \subseteq \mathbf{PS}$. 
\end{rem}

\section{Some definitions and basic observations}

\subsection{Matrices and matrix-sets}

\begin{notation}[{\bf Submatrices and minors}] Given $A \in \mathbb{R}^{n \times m}$ and nonempty sets $\alpha \subseteq \{1, \ldots, n\}$ and $\beta \subseteq \{1,\ldots, m\}$, $A(\alpha|\beta)$ is the submatrix of $A$ with rows from $\alpha$ and columns from $\beta$. If $|\alpha| = |\beta|$, then $A[\alpha|\beta] \stackrel{\text{\tiny def}}{=} \mathrm{det}(A(\alpha|\beta))$, and $A[\alpha]$ is shorthand for the {\bf principal minor} $A[\alpha|\alpha]$. 
\end{notation}

\begin{lemma}[{\bf The Cauchy-Binet formula}] 
Given $A \in \mathbb{R}^{n \times m}$ and $B \in \mathbb{R}^{m \times n}$, and any nonempty $\alpha \subseteq \{1, \ldots, n\}$, $\beta \subseteq \{1, \ldots, m\}$ with $|\alpha| = |\beta|$:
\begin{equation}
\label{eqCB0}
(AB)[\alpha|\beta] = \sum_{\substack{\gamma \subseteq \{1, \ldots, m\}\\ |\gamma| = |\alpha|}}A[\alpha|\gamma]B[\gamma|\beta]. 
\end{equation}
\end{lemma}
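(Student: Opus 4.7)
The plan is to reduce to the square case (taking full row-index sets after first restricting rows and columns) and then expand by multilinearity of the determinant.

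First I would set $k=|\alpha|=|\beta|$ and define $A' = A(\alpha|\{1,\ldots,m\}) \in \mathbb{R}^{k \times m}$ and $B' = B(\{1,\ldots,m\}|\beta) \in \mathbb{R}^{m \times k}$. Direct inspection of matrix multiplication gives $(AB)(\alpha|\beta) = A'B'$, and for each size-$k$ subset $\gamma \subseteq \{1,\ldots,m\}$, $A[\alpha|\gamma] = \det A'(\{1,\ldots,k\}|\gamma)$ and $B[\gamma|\beta] = \det B'(\gamma|\{1,\ldots,k\})$. So it suffices to prove that for an arbitrary $k \times m$ matrix $C$ and $m \times k$ matrix $D$,
\[
\det(CD) = \sum_{\substack{\gamma\subseteq\{1,\ldots,m\}\\|\gamma|=k}}\det C(\{1,\ldots,k\}|\gamma)\,\det D(\gamma|\{1,\ldots,k\}).
\]

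For this, I would write the $i$-th row of $CD$ as $\sum_{j=1}^m C_{ij} R_j$, where $R_1,\ldots,R_m$ are the rows of $D$, and apply multilinearity of the determinant in its rows:
\[
\det(CD) = \sum_{(j_1,\ldots,j_k)\in\{1,\ldots,m\}^k}\left(\prod_{i=1}^k C_{i,j_i}\right)\det\begin{pmatrix}R_{j_1}\\ \vdots\\ R_{j_k}\end{pmatrix}.
\]
Tuples with a repeated index vanish since the determinant is alternating, so only the injections $\{1,\ldots,k\}\hookrightarrow\{1,\ldots,m\}$ survive. Grouping these injections by their image $\gamma$ and sorting within each $\gamma$, at the cost of the sign of the sorting permutation, converts the matrix of rows into $D(\gamma|\{1,\ldots,k\})$ and collapses the coefficient summed over all $k!$ orderings into $\det C(\{1,\ldots,k\}|\gamma)$, giving the claimed identity.

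There is no real obstacle; the only care needed is in the sign-bookkeeping when turning a sum over injective $k$-tuples into a sum over $k$-subsets written in increasing order. An alternative that avoids this bookkeeping is to compute $\det\begin{pmatrix}I_m & -D \\ C & 0\end{pmatrix}$ two ways: block row reduction (adding $C$ times the top block-row to the bottom) produces $\begin{pmatrix}I_m & -D \\ 0 & CD\end{pmatrix}$, whose determinant is $\det(CD)$; and the generalised Laplace expansion along the bottom $k$ rows, noting that any column chosen from the last $k$ contributes zero, forces the column-subset $\gamma$ to lie in $\{1,\ldots,m\}$ and yields precisely the Cauchy-Binet sum.
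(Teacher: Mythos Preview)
Your argument is correct: the reduction to the square case is valid, the multilinearity expansion followed by the alternating-sign regrouping is the classical derivation, and the block-matrix alternative via $\det\begin{pmatrix}I_m & -D\\ C & 0\end{pmatrix}$ is also standard and sound. The only remark is that the paper does not actually prove this lemma at all---its ``proof'' is the single line ``See \cite{gantmacher}, for example''---so there is no approach in the paper to compare against; you have simply supplied one of the textbook proofs that such a citation points to.
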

\begin{proof}
See \cite{gantmacher}, for example. 
\qquad \qed \end{proof}

\begin{notation}[{\bf Products/functions of matrix-sets}] 
If $\mathcal{A}, \mathcal{B}$ are sets of matrices of suitable dimension then
\[
\mathcal{A}\mathcal{B} = \{AB\colon A \in \mathcal{A}, B \in \mathcal{B}\}\,.
\]
$\mathrm{spec}\,A$ will denote the spectrum of a square matrix $A$ regarded (depending on context) either as a set in $\mathbb{C}$ or a multiset in $\mathbb{C}$. Given a set of matrices $\mathcal{A}$, $\mathrm{spec}\,\mathcal{A}$ is an abbreviation for $\cup_{A \in \mathcal{A}}\mathrm{spec}\,A$, regarded as a set.
\end{notation}

\subsection{Graphs and digraphs}

\begin{definition}[{\bf Walk, subwalk, cycle}]
Following \cite{BondyMurty}, a walk $W$ in a graph $G$ is defined  as a nonempty alternating sequence of vertices and edges, beginning and ending with a vertex, and where each edge in $W$ is preceded and followed by its two end-points. In the case of a digraph each edge is preceded by its start-point and followed by its end-point. The {\bf length} $|W|$ of $W$ is the number of edges in $W$, counted with repetition. If the first and last vertex are the same, the walk is {\bf closed}. We consider two closed walks as equivalent if they differ only in the choice of initial/terminal vertex; by an abuse of notation each equivalence class will be termed a closed walk. In what follows we may refer to walks by their sequence of edges, or their sequence of vertices. A subwalk of $W$ is a walk which is also a subsequence of consecutive entries in $W$ (these are termed ``sections'' in \cite{BondyMurty}). A cycle is a closed walk without repeated vertices (except, naturally, the initial/final vertex). 
\end{definition}

\begin{notation}[{\bf Indices on vertices of a closed walk}] Given a closed walk $(v_0, v_1, \ldots, v_r=v_0)$ of length $r$, all vertex indices are assumed without comment to be reduced modulo $r$. 
\end{notation}

\begin{rem}
\label{remwalkdegree}
Underlying a walk $W$ on a graph $G$ is a connected subgraph $W'$ of $G$. Given a closed walk $W$ such that $W'$ is a tree, if some vertex $v \in V(W')$ has degree $m$ in $W'$, then $v$ occurs {\em at least} $m$ times in $W$. 
\end{rem}

\begin{definition}[{\bf Tree walk, caterpillar walk}]
If the graph underlying a walk $W$ is a tree, we say $W$ is a tree walk. If the graph underlying a walk $W$ is a caterpillar, we say $W$ is a caterpillar walk.
\end{definition}

\begin{definition}[{\bf Weighted digraph of a matrix-product}] 
\label{bcgraph}
Given a square matrix-product $A_1\cdots A_k$ define
\[
M(A_1,\cdots, A_k) = \left(\begin{array}{ccccc}\mathbf{0} &\mathrm{sign}\,A_1 & \mathbf{0} &\cdots & \mathbf{0}\\\mathbf{0} &\mathbf{0} &\mathrm{sign}\,A_2 & \cdots &\mathbf{0} \\ 
\vdots & \vdots & \vdots & \ddots & \vdots\\
\mathbf{0} &\mathbf{0} &\mathbf{0}& \cdots &\mathrm{sign}\,A_{k-1} \\ 
\mathrm{sign}\,A_{k} &\mathbf{0} &\mathbf{0} & \cdots &\mathbf{0}\end{array}\right)
\]
and regard this matrix as the adjacency matrix of a weighted digraph $\myd{G}_{A_1\cdots A_k}$ where an edge has weight $1$ if it corresponds to a negative entry in $M(A_1,\cdots, A_k)$ and weight $0$ if it corresponds to a positive entry in $M(A_1,\cdots, A_k)$. 
\end{definition}

\begin{rem}
$\myd{G}_{A_1\cdots A_k}$ is just the ``signed $(k,\{1\})$-block circulant digraph'' of the matrix-product $A_1\cdots A_k$ as defined in \cite{banajirutherford1} with edge-weights replacing signs to make the computations here more natural. Each edge $e$ of $\myd{G}_{A_1\cdots A_k}$ corresponds to a unique nonzero entry in some $A_j$.
\end{rem}

\begin{notation}
Following the convention noted earlier, we write $G_{A_1\cdots A_k}$ for the unweighted version of $\myd{G}_{A_1\cdots A_k}$.
\end{notation}

\begin{notation}[{\bf Isomorphism}]
We write $G \cong H$ if two (unweighted) graphs $G,H$ are isomorphic, namely have permutation-similar adjacency matrices. A weighted graph $\myd{G}$ defines a signed adjacency matrix $A(\myd{G})$ with negative entries in $A(\myd{G})$ corresponding to edges with weight $1$ in $\myd{G}$. Given two such graphs, $\myd{G} \cong \myd{\Gamma}$ will mean that $\myd{G}$ and $\myd{\Gamma}$ have permutation-similar signed adjacency matrices.
\end{notation}
There is a special case of the weighted digraph of a matrix-product most relevant here: given $A \in \mathbb{R}^{n \times m}$ and $s \in \mathbb{N}$ we will be interested in the digraph $\myd{G}_{(AA^{\mathrm{t}})^s}$. As vertices of both $\myd{G}_{(AA^{\mathrm{t}})^s}$ and $\myd{\Gamma}_A$ correspond to rows/columns of the matrix $A$, and each edge of $\myd{G}_{(AA^{\mathrm{t}})^s}$ corresponds to an entry in $A$, there is a natural association between $\myd{G}_{(AA^{\mathrm{t}})^s}$ and $\myd{\Gamma}_A$.

\begin{definition}[{\bf Projection from digraph to bipartite graph}]
Given a matrix $A \in \mathbb{R}^{n \times m}$, a positive integer $s$, the weighted digraph $\myd{G} = \myd{G}_{(AA^{\mathrm{t}})^s}$, and the weighted bipartite graph $\myd{\Gamma}_A$, define in a natural way the projection $\pi\colon \myd{G} \to \myd{\Gamma}_A$ which takes vertices to vertices and weighted edges to weighted edges. Thus with the notation of Definition~\ref{defbipartite}, if $e \in E(\myd{G})$ corresponds to entry $A_{ij}$ then $\pi(e) = X_iY_j$. 
\end{definition}

\begin{definition}[{\bf Weight of an edge-list}]
Given a weighted (di)graph $\myd{G} = (G, w\colon E(G) \to \{0,1\})$, and any edge-list $E' = (e_1, \ldots, e_k)$ where $e_i \in E(G)$ for each $i$ define
\[
w(E') = \sum_{i = 1}^k w(e_i) \pmod 2
\]
as the weight of $E'$. 
\end{definition}

\begin{rem}[{\bf Weight of a closed tree walk}]
\label{remclosedweight}
If $G$ is any weighted graph and $W$ is a closed tree walk on $G$, then $w(W) = 0$. This is clear, since each edge must be traversed an even number of times. 
\end{rem}

\begin{definition}[{\bf k-weight, k-odd, k-even}]
Given a weighted graph or digraph $\myd{G} = (G, w\colon E(G) \to \{0,1\})$ and some list of edges $E$, define
\[
w_k(E) = |E|/k + w(E) \pmod 2\,,
\]
as the $k$-weight of $E$. Thus, for example, $w_2(E) \in \{0, \frac{1}{2}, 1, \frac{3}{2}\}$. A cycle $C$ is termed $k$-odd if its $k$-weight is $1$ and $k$-even if its $k$-weight is $0$. A weighted graph or digraph $\myd{G}$ is termed $k$-odd if all its cycles are $k$-odd. 
\end{definition}

\begin{rem}
Clearly a necessary condition for a weighted (di)graph to be $2$-odd is for it to be bipartite: otherwise it includes a cycle with non-integer $2$-weight. More generally, each cycle in a graph with $k$-block circulant structure (such as $\myd{G}_{A_1\cdots A_k}$ in Definition~\ref{bcgraph}) has length a multiple of $k$ and hence is either $k$-even or $k$-odd.
\end{rem}

\begin{rem}
\label{remcompound}
Given $A \in \mathbb{R}^{n \times m}$, it is shown in \cite{banajicraciun} that $\myd{\Gamma}_A$ is $2$-odd if and only if all minors of $A$ are signed. This can be phrased elegantly using compound matrices (\cite{muldowney} for example): $\myd{\Gamma}_A$ is $2$-odd if and only if
\[
\Lambda^k\mathcal{Q}(A) \subseteq \mathcal{Q}(\Lambda^k A)
\]
for all $k = 1, \ldots, \min\{n,m\}$. Here $\Lambda^k A$ is the $k$th exterior power (or $k$th multiplicative compound) of $A$, namely the ${n\choose k} \times {m\choose k}$ matrix of $k \times k$ minors of $A$; $\Lambda^k\mathcal{Q}(A)$ is an abbreviation for $\{\Lambda^k B\colon B \in \mathcal{Q}(A)\}$. 
\end{rem}

\begin{definition}
We refer to the following tree as $T^*$: 
\begin{center}
\begin{tikzpicture}[domain=0:4,scale=0.6]

\path (0,0) coordinate(R1);
\path (1,0) coordinate(R2);
\path (2,0) coordinate(R3);
\path (3,0) coordinate(R4);
\path (4,0) coordinate(R5);
\path (2,1) coordinate(R6);
\path (2,2) coordinate(R7);

\draw[thick] (R1)--(R2);
\draw[thick] (R2)--(R3);
\draw[thick] (R3)--(R4);
\draw[thick] (R4)--(R5);
\draw[thick] (R3)--(R6);
\draw[thick] (R6)--(R7);

\fill[color=white] (R1) circle (5pt);
\fill[color=white] (R2) circle (5pt);
\fill[color=white] (R3) circle (5pt);
\fill[color=white] (R4) circle (5pt);
\fill[color=white] (R5) circle (5pt);
\fill[color=white] (R6) circle (5pt);
\fill[color=white] (R7) circle (5pt);

\fill[color=black] (R1) circle (2pt);
\fill[color=black] (R2) circle (2pt);
\fill[color=black] (R3) circle (2pt);
\fill[color=black] (R4) circle (2pt);
\fill[color=black] (R5) circle (2pt);
\fill[color=black] (R6) circle (2pt);
\fill[color=black] (R7) circle (2pt);

\end{tikzpicture}
\end{center}
As is well-known, a forest is a caterpillar forest if and only if it includes no subgraph isomorphic to $T^*$.
\end{definition}

\section{Preliminary results}

\subsection{Basic properties of $\mathcal{Q}^k(A)$}

\begin{lemma}
\label{lempathcon}
Given $A \in \mathbb{R}^{n \times m}$, for each $k$, $\mathcal{Q}^k(A)$ is path-connected. 
\end{lemma}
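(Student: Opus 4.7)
The plan is to exhibit $\mathcal{Q}^k(A)$ as the continuous image of a path-connected space, and deduce path-connectedness from that.

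First, I would observe that $\mathcal{Q}(A)$ itself is convex, and hence path-connected. Indeed, if $B, B' \in \mathcal{Q}(A)$ and $t \in [0,1]$, then entry-by-entry $(1-t)B_{ij} + tB'_{ij}$ has the same sign as $A_{ij}$: if $A_{ij} > 0$ then both $B_{ij}$ and $B'_{ij}$ are positive and so is the convex combination, and similarly for the negative and zero cases. Thus the straight-line homotopy stays inside $\mathcal{Q}(A)$.

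Next, the $k$-fold Cartesian product $\mathcal{Q}(A)^k$, equipped with the product topology inherited from $(\mathbb{R}^{n\times m})^k$, is path-connected because a product of path-connected spaces is path-connected (any two $k$-tuples can be joined coordinatewise). Consider the map
\[
\Phi\colon \mathcal{Q}(A)^k \to \mathbb{R}^{n \times n}, \qquad (A_1,\ldots,A_k) \mapsto A_1 A_2^{\mathrm{t}} A_3 \cdots A_k^{(\mathrm{t})},
\]
where the final transpose is applied iff $k$ is even. This map is continuous, since matrix multiplication and transposition are continuous operations. By definition, $\mathcal{Q}^k(A) = \Phi(\mathcal{Q}(A)^k)$. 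Since the continuous image of a path-connected space is path-connected, it follows that $\mathcal{Q}^k(A)$ is path-connected.

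There is no real obstacle; the only thing worth flagging is that one should be careful about whether $k$ is even or odd when writing out the product, but this does not affect the argument since only continuity of the map is used.
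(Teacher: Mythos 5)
Your proof is correct and is essentially the paper's own argument in topological dress: the paper's explicit path $\gamma(t) = \prod_{i=1}^k\bigl(tC_i + (1-t)B_i\bigr)$ is exactly the image under your multiplication map $\Phi$ of the coordinatewise straight-line path in $\mathcal{Q}(A)^k$, relying on the same convexity of qualitative classes. One trivial slip: for odd $k$ the product $A_1A_2^{\mathrm{t}}\cdots A_k$ lies in $\mathbb{R}^{n\times m}$, not $\mathbb{R}^{n\times n}$, but this does not affect the argument.
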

\begin{proof}
Given any $B_1,\ldots,B_k$ and $C_1,\ldots,C_k$ such that $B_i, C_i \in \mathcal{Q}(A)$ for odd $i$ and $B_i, C_i \in \mathcal{Q}(A^{\mathrm{t}})$ for even $i$, define $B = \prod_{i=1}^kB_i$, $C = \prod_{i=1}^kC_i$. Then $B, C \in \mathcal{Q}^k(A)$ and $\gamma(t) = \prod_{i=1}^k(tC_i + (1-t)B_i)$, $0 \leq t \leq 1$, defines a path from $B$ to $C$ in $\mathcal{Q}^k(A)$. 
\qquad \qed \end{proof}

\begin{lemma}
\label{lemoddrank}
Let $A \in \mathbb{R}^{n \times m}$ have rank $s$, and let $\myd{\Gamma}_A$ be $2$-odd. For each positive integer $k$: 
\begin{enumerate}
\item All matrices in $\mathcal{Q}^k(A)$ and $\mathcal{Q}^k(A^{\mathrm{t}})$ have rank $s$. 
\item All matrices in $\mathcal{Q}^{2k}(A)$ (resp. $\mathcal{Q}^{2k}(A^{\mathrm{t}})$) have exactly $n-s$ (resp. $m-s$) zero eigenvalues. 
\end{enumerate}
\end{lemma}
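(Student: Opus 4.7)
The approach is to leverage Remark~\ref{remcompound}: the hypothesis that $\myd{\Gamma}_A$ is $2$-odd says precisely that every minor of $A$ is signed, so for each $t$ the compound $\Lambda^t B$ shares its sign pattern with $\Lambda^t A$ for every $B \in \mathcal{Q}(A)$. I would split the proof into part 1 (the rank statement) and part 2 (the eigenvalue count), the latter being a short deduction from the former.

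For part 1, first observe that every $B \in \mathcal{Q}(A)$ has rank exactly $s$: since $\mathrm{rank}\, A = s$, all $(s+1)$-minors of $A$ vanish and some $s$-minor is nonzero, so the same holds for $B$ by the $2$-odd hypothesis. Hence for any $P = B_1 B_2^{\mathrm{t}} B_3 \cdots \in \mathcal{Q}^k(A)$ one has $\mathrm{rank}\, P \leq s$ for free. For the matching lower bound I would pass to the $s$th compound via Cauchy-Binet,
\[
\Lambda^s P \;=\; \Lambda^s B_1 \cdot \Lambda^s B_2^{\mathrm{t}} \cdot \Lambda^s B_3 \cdots ,
\]
and exploit the fact that $\mathrm{rank}\, A = s$ forces $\Lambda^s A$ to be a rank-$1$ outer product $u v^{\mathrm{t}}$. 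Each $\Lambda^s B_i$ is also rank $1$ and inherits the sign pattern of $\Lambda^s A$, so can be written $u_i v_i^{\mathrm{t}}$ with $\mathrm{sign}(u_i) = \mathrm{sign}(u)$ and $\mathrm{sign}(v_i) = \mathrm{sign}(v)$; analogously $\Lambda^s B_i^{\mathrm{t}} = v_i u_i^{\mathrm{t}}$. Expanding, the product telescopes into a single rank-$1$ outer product scaled by scalars of the form $v_i^{\mathrm{t}} v_{i+1}$ and $u_i^{\mathrm{t}} u_{i+1}$ --- inner products of sign-aligned vectors, each a sum of same-sign terms, hence strictly positive since $u$ and $v$ are nonzero. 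Therefore $\Lambda^s P \neq 0$, giving $\mathrm{rank}\, P \geq s$.

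For part 2, given $P \in \mathcal{Q}^{2k}(A)$, observe that $P^2 \in \mathcal{Q}^{4k}(A)$, so part 1 applied to both yields $\mathrm{rank}\, P = \mathrm{rank}\, P^2 = s$. The inclusion $\ker P \subseteq \ker P^2$ then becomes an equality by dimension count, so $0$ is a semisimple eigenvalue of $P$ and its algebraic multiplicity equals $\dim \ker P = n - s$. The statement for $\mathcal{Q}^{2k}(A^{\mathrm{t}})$ is symmetric.

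The main obstacle is the non-vanishing of $\Lambda^s P$ in part 1: the $2$-odd hypothesis gives only sign control, not magnitudes, so a priori Cauchy-Binet cancellations could wipe out the compound. The rescue is the rank-$1$ structure of $\Lambda^s A$: once every $\Lambda^s B_i$ inherits the same rank-$1$ sign pattern, each intermediate inner product is a sum of same-sign products and thus strictly positive rather than merely nonnegative.
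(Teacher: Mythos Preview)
Your argument is correct and takes a genuinely different route from the paper for part~(1). The paper proceeds in three stages: first $k=1$ via the sign-determined minors; then $k=2$ by applying Cauchy--Binet to a single principal minor $(A_1A_2)[\alpha]$, observing that each summand $A_1[\alpha|\gamma]A_2[\gamma|\alpha]$ is nonnegative with at least one strictly positive, and extracting from this the kernel--image disjointness $\mathrm{im}(A_2)\cap\ker(A_1)=\{0\}$; finally an induction on $k$ using that disjointness to push the rank through one more factor at a time. Your approach bypasses the induction entirely by passing globally to the $s$th compound: since $\Lambda^s A$ has rank exactly one (as $\mathrm{rank}\,A=s$), each $\Lambda^s B_i$ is a rank-one outer product with sign-aligned factors, and the compound of the whole product telescopes into a single nonzero outer product scaled by inner products of sign-aligned vectors, which are strictly positive. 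This handles all $k$ in one stroke and is arguably cleaner, at the cost of invoking the fact $\mathrm{rank}\,\Lambda^s A = \binom{\mathrm{rank}\,A}{s}$ and the (easy) observation that a rank-one matrix with prescribed sign pattern has sign-determined factors up to a joint sign flip. The paper's route is more hands-on and stays at the level of individual minors. For part~(2) the two arguments coincide: both use part~(1) applied to powers of $P$ to force the zero eigenvalue to be semisimple.
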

\begin{proof}
First, observe that statement (2) follows easily from (1): if some $M \in \mathcal{Q}^{2k}(A)$ has rank $s$ but more than $n-s$ zero eigenvalues, then some power of $M$ must have rank less than $s$, contradicting (1). A similar argument applies to $\mathcal{Q}^{2k}(A^{\mathrm{t}})$. So we need prove only (1). Observe also that for fixed $k$ all matrices in $\mathcal{Q}^k(A)$ have rank $s$ if and only if all matrices in $\mathcal{Q}^k(A^{\mathrm{t}})$ have rank $s$ since each element in $\mathcal{Q}^k(A^{\mathrm{t}})$ is the transpose of an element of $\mathcal{Q}^k(A)$. In what follows $A_i \in \mathcal{Q}(A)$ for odd $i$ and $A_i \in \mathcal{Q}(A^{\mathrm{t}})$ for even $i$.

(i) $k=1$: since $\myd{\Gamma}_A$ is $2$-odd, given $A_1 \in \mathcal{Q}(A)$ and nonempty $\alpha \subseteq \{1, \ldots, n\}$, $\beta \subseteq \{1, \ldots, m\}$ with $|\alpha|=|\beta|$, $\mathrm{sign}(A_1[\alpha|\beta]) = \mathrm{sign}(A[\alpha|\beta])$ (Theorem~11 in \cite{banajicraciun}). In terms of compound matrices, $\Lambda^k(A_1) \in \mathcal{Q}(\Lambda^k(A))$ for each $k$ (see Remark~\ref{remcompound}). That $\mathrm{rank}(A_1) = s$ is now immediate since the rank of a matrix is the order of its largest nonsingular square submatrix. 

(ii) $k=2$: consider any $A_1, A_2^{\mathrm{t}} \in \mathcal{Q}(A)$. By (i) $A_1[\alpha'|\beta']A_2[\beta'|\alpha'] \geq 0$ for all nonempty $\alpha' \subseteq \{1, \ldots, n\}, \beta' \subseteq \{1, \ldots, m\}$ of equal size, and moreover there exist $\alpha \subseteq \{1, \ldots, n\}, \beta \subseteq \{1, \ldots, m\}$ with $|\alpha| = |\beta| = s$ and such that $A_1[\alpha|\beta]A_2[\beta|\alpha] > 0$. Applying the Cauchy-Binet formula, $(A_1A_2)[\alpha] > 0$, i.e., $A_1A_2$ has a nonzero principal minor of order $s$ and so $\mathrm{rank}(A_1A_2) \geq s$. Since $\mathrm{rank}(A_1A_2) \leq s$, we conclude that $\mathrm{rank}(A_1A_2) = s$. Since $A_2$ has rank $s$, this implies that $\mathrm{im}(A_2) \cap \mathrm{ker}(A_1) = \{0\}$. Similarly $(A_2A_1)[\beta] > 0$, so $A_2A_1$ has rank $s$ and $\mathrm{im}(A_1) \cap \mathrm{ker}(A_2) = \{0\}$. 

(iii) To show the result for arbitrary $k$ we proceed by induction. Suppose the result is true for $k = r \geq 2$ and consider a product of the form $A_1A_2\cdots A_{r+1}$. By the inductive hypothesis, $\mathrm{rank}(A_2\cdots A_{r+1}) = s$. But $\mathrm{im}(A_{2}\cdots A_{r+1}) \subseteq \mathrm{im}(A_2)$ and, by (ii), $\mathrm{im}(A_2) \cap \mathrm{ker}(A_1) = \{0\}$. So $\mathrm{rank}(A_1A_2\cdots A_{r+1}) = s$. \qquad \qed \end{proof}

\subsection{$P_0$ matrix-products and $k$-odd digraphs}

We first need to state and develop some results from \cite{banajirutherford1}.

\begin{thm}
\label{main}
Consider any matrices $A_0,A_1,\ldots, A_{k-1}$ of dimensions such that $A_0A_1\cdots A_{k-1} \in \mathbb{R}^{n \times n}$. Then 
\begin{enumerate}
\item $\mathcal{Q}(A_0)\mathcal{Q}(A_1)\cdots \mathcal{Q}(A_{k-1}) \subseteq \mathbf{P_0}$ $\Leftrightarrow$ $\myd{G}_{A_0\cdots A_{k-1}}$ is $k$-odd.
\item $\mathcal{Q}(A_0)\mathcal{Q}(A_1)\cdots \mathcal{Q}(A_{k-1}) \subseteq \mathbf{PS}$ $\Rightarrow$ $\myd{G}_{A_0\cdots A_{k-1}}$ is $k$-odd. 
\end{enumerate}
\end{thm}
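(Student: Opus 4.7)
The plan is to expand each principal minor of $A_0A_1\cdots A_{k-1}$ as a signed sum over cycle covers of $\myd{G}_{A_0\cdots A_{k-1}}$, read off the exact sign of each cover in terms of cycle lengths and weights, then exploit the freedom of scaling within the qualitative classes $\mathcal{Q}(A_i)$ to isolate a single offending cycle when one exists.

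Fix $\alpha\subseteq\{1,\ldots,n\}$ with $|\alpha|=m$. Iterated Cauchy--Binet gives
\[
(A_0\cdots A_{k-1})[\alpha]=\sum_{\alpha_1,\ldots,\alpha_{k-1}}A_0[\alpha|\alpha_1]A_1[\alpha_1|\alpha_2]\cdots A_{k-1}[\alpha_{k-1}|\alpha]\,,
\]
and expanding each factor as a signed sum over bijections $\pi_s\colon\alpha_s\to\alpha_{s+1}$ rewrites the whole minor as a sum indexed by vertex-disjoint collections $\mathcal{F}=\{C_1,\ldots,C_p\}$ of directed cycles in $\myd{G}_{A_0\cdots A_{k-1}}$ whose level-$0$ vertices partition $\alpha$. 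Working out the bookkeeping, a cycle $C_i$ of length $j_ik$ plays the role of a length-$j_i$ cycle in the composite permutation of $\alpha$; its permutation-sign contribution is $(-1)^{j_i-1}$ and the product of its entries contributes $(-1)^{w(C_i)}\prod_{e\in C_i}|a_e|$. Hence the contribution of $\mathcal{F}$ equals
\[
\prod_{i=1}^p(-1)^{j_i-1+w(C_i)}\prod_{e\in\mathcal{F}}|a_e|\,,
\]
which is nonnegative precisely when every $C_i$ satisfies $j_i+w(C_i)\equiv 1\pmod 2$, i.e., is $k$-odd.

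The $\Leftarrow$ direction of (1) is then immediate: if $\myd{G}_{A_0\cdots A_{k-1}}$ is $k$-odd, every cover is a disjoint union of $k$-odd cycles, so every term of the expansion is a product of nonnegative factors, and every principal minor is nonnegative for every $A_i\in\mathcal{Q}(A_i)$. For the $\Rightarrow$ of (1) and for (2), I argue by contrapositive. Suppose some cycle $C$ of length $jk$ is not $k$-odd, and let $\alpha_C$ be its level-$0$ vertex set, of size $j$. Choose $A_i\in\mathcal{Q}(A_i)$ with $|a_e|=1$ on the edges of $C$ and $|a_e|=\epsilon$ elsewhere. As $\epsilon\to 0$, only covers whose edges all lie in $C$ survive; since $C$ is a single cycle, the only such cover is $\{C\}$ itself, and it contributes only to the principal minor at $\alpha_C$. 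Hence $(A_0\cdots A_{k-1})[\alpha_C]\to(-1)^{j-1+w(C)}=-1$; similarly the $j$-th characteristic coefficient $e_j=\sum_{|\alpha|=j}(A_0\cdots A_{k-1})[\alpha]\to -1$. For small enough $\epsilon$ the product matrix has a negative principal minor (failing $\mathbf{P_0}$) and a negative $e_j$ (so its spectrum is not contained in $[0,\infty)$, failing $\mathbf{PS}$), establishing both contrapositives.

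The main obstacle is the cycle-cover expansion itself. While intuitively clear, it requires careful tracking of the signs of the bijections $\pi_s$ produced by Cauchy--Binet to confirm that the composite permutation of $\alpha$ decomposes into permutation-cycles of lengths $j_i$ exactly matching the lengths $j_i k$ of the directed cycles $C_i$ in $\myd{G}_{A_0\cdots A_{k-1}}$, and that the product of the signs of the $\pi_s$ correctly produces the factor $(-1)^{j_i-1}$ for each such cycle. Once this combinatorial identity is pinned down, the rest of the argument is routine.
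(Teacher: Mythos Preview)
Your proposal is correct, and for part~(2) it is essentially the paper's argument: both proceed by contrapositive, fix a $k$-even cycle $C$, and suppress all entries off $C$ so that the product becomes (or limits to) a matrix whose only nontrivial principal minor is at $\alpha_C$ and equals $-1$. The paper goes directly to $\epsilon=0$, i.e.\ works in $\mathcal{Q}_0(A_i)$, computes the full characteristic polynomial $\lambda^{n-j}(\lambda^j+(-1)^{j+1})$, observes $-1$ is always a root, and then invokes closedness of $\mathbf{PS}$ to pull back to $\mathcal{Q}(A_i)$; you instead keep $\epsilon>0$ small, stay inside $\mathcal{Q}(A_i)$, and conclude via the sign of the single coefficient $e_j$. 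Your route avoids the closure argument; the paper's route yields the slightly sharper conclusion (noted in the Remark after the proof) that one may also forbid cycles of length $jk$ with $j\geq 2$.

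For part~(1) the paper does not give a proof at all but cites it from \cite{banajirutherford1}. Your cycle-cover expansion via iterated Cauchy--Binet is the natural way to supply one, and the sign bookkeeping you flag as the main obstacle does work out: with $c_s\colon\{1,\ldots,m\}\to\alpha_s$ the order-preserving bijections and $\pi_s=c_{s+1}\sigma_s c_s^{-1}$, the composite $\pi_{k-1}\circ\cdots\circ\pi_0$ on $\alpha$ is conjugate to $\sigma_{k-1}\circ\cdots\circ\sigma_0$, so the product of the $\mathrm{sgn}(\sigma_s)$ equals the sign of the induced permutation of $\alpha$, and its cycle lengths are exactly the $j_i$ with $|C_i|=j_ik$. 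With that in hand, both directions of~(1) go through as you describe.
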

\begin{proof}
{\bf (1)} This is a combination of Theorems~1~and~2 in \cite{banajirutherford1} rephrased in the terminology of this paper. 

{\bf (2)} Suppose $\myd{G}_{A_0\cdots A_{k-1}}$ fails to be $k$-odd. Let $C$ be a $k$-even cycle in $\myd{G}_{A_0\cdots A_{k-1}}$ of length $jk$, and suppose the edges of $C$ are $(e_0, e_1, \ldots, e_{jk-1})$. Assume (without loss of generality) that edge $e_m$ corresponds to an entry in $A_{m \bmod k}$ for each $m$, so corresponding to $C$ is a list of matrix entries, say
\[
(A_0)_{t_0t_1}, (A_1)_{t_1t_2}, \ldots, (A_0)_{t_kt_{k+1}}, (A_1)_{t_{k+1}t_{k+2}}, \ldots, (A_{k-1})_{t_{jk-1}t_0}\,.
\]
Note that the indices satisfy the restriction $t_i \neq t_{i + rk \bmod jk}$ for any $r \in \{1, \ldots, j-1\}$ (otherwise $C$ has a repeated vertex and  fails to be a cycle). Since $C$ is $k$-even, $j + \sum_{m=0}^{jk-1} w(e_m) \equiv 0 \bmod 2$, i.e., 
\[
(-1)^j(A_0)_{t_0t_1}(A_1)_{t_1t_2}\cdots (A_{k-1})_{t_{jk-1}t_0} > 0\,.
\]
For $i = 0, \ldots, k-1$, define $\tilde A_i\in \mathcal{Q}_0(A_i)$ via: 
\[
(\tilde A_i)_{pq} = \left\{\begin{array}{ll}\mathrm{sign}((A_i)_{pq}) & \mbox{if  } p=t_r,q=t_{r+1}\,\,\,\mbox{for some}\,\,\, r \equiv i \pmod k\\0&\mbox{otherwise}\end{array}\right. 
\]
(Namely, we replace those entries of $A_i$ which contribute to the cycle $C$ with their signs, and replace the remaining entries with zeros.) Define $B = \tilde {A}_0\cdots \tilde{A}_{k-1}$, and compute:
\[
B_{ij} = \sum_{q_1,q_2,\ldots,q_{k-1}}(\tilde {A}_0)_{iq_1}(\tilde {A}_1)_{q_1q_2}\cdots(\tilde {A}_{k-1})_{q_{k-1}j}\,.
\]
(Here each index $q_i$ ranges over all values such that the expression makes sense.) By the definitions of $\tilde A_i$, there are exactly $j$ nonzero products of this form (each of which necessarily has value $\pm 1$), namely:
\[
(\tilde {A}_0)_{t_{sk}t_{sk+1}}\cdots(\tilde {A}_{k-1})_{t_{(s+1)k-1}t_{(s+1)k}}, \quad s= 0, \ldots, j-1\,.
\]
(Here we define $t_{jk} = t_0$.) In other words:
\[
B_{pq} = \left\{\begin{array}{ll}\prod_{i = 0}^{k-1}(\tilde {A}_i)_{t_{sk+i}t_{sk+i+1}} & \mbox{if  } p=t_{sk},q=t_{(s+1)k}\,\, (s = 0, \ldots, j-1)\\0&\mbox{otherwise}\end{array}\right. 
\]
Note that $t_{rk} \neq t_{sk}$ for $r, s \in \{0, \ldots, j-1\}$, $r \neq s$. In other words, the only nonzero entries in $B$ are $B_{t_0t_k}, B_{t_kt_{2k}}, \ldots, B_{t_{(j-1)k}t_0}$. Immediately this implies that given $\alpha \subseteq \{1, \ldots, n\}$, the principal minor $B[\alpha] = 0$ unless $\alpha = \alpha' \stackrel{\text{\tiny def}}{=} \{t_0, t_k, \ldots, t_{(j-1)k}\}$, in which case 
\begin{eqnarray*}
B[\alpha'] = (-1)^{j+1}\prod_{s=0}^{j-1}B_{t_{sk}t_{(s+1)k}} &=& (-1)^{j+1}\prod_{s=0}^{j-1}\prod_{i = 0}^{k-1}(\tilde {A}_i)_{t_{sk+i}t_{sk+i+1}}\\ &=& (-1)^{j+1}(\tilde A_0)_{t_0t_1}(\tilde A_1)_{t_1t_2}\cdots (\tilde A_{k-1})_{t_{jk-1}t_0}\,.
\end{eqnarray*}
Since $(-1)^j(A_0)_{t_0t_1}(A_1)_{t_1t_2}\cdots (A_{k-1})_{t_{jk-1}t_0} > 0$, $(-1)^jB[\alpha'] = (-1)^{j+1}$. Thus the characteristic polynomial of $B$ is $\lambda^{n-j}(\lambda^j + (-1)^jB[\alpha']) = \lambda^{n-j}(\lambda^j + (-1)^{j+1})$. For $j=1$, the unique nonzero eigenvalue of $B$ is $-1$, while for $j \geq 2$, the nonzero eigenvalues of $B$ are just the $j$th roots of $(-1)^{j}$ which clearly do not all lie on the nonnegative real axis (in fact, $-1$ is a root in each case). Thus $\mathcal{Q}_0(A_0)\mathcal{Q}_0(A_1)\cdots\mathcal{Q}_0(A_{k-1}) \not \subseteq \mathbf{PS}$. As $\mathbf{PS}$ is closed, $\mathcal{Q}(A_0)\mathcal{Q}(A_1)\cdots\mathcal{Q}(A_{k-1}) \not \subseteq \mathbf{PS}$.
\qquad \qed \end{proof}

\begin{rem}
The proof makes it clear that the second claim of Theorem~\ref{main} could in fact be strengthened to $\mathcal{Q}(A_0)\mathcal{Q}(A_1)\cdots \mathcal{Q}(A_{k-1}) \subseteq \mathbf{PS}$ implies that $\myd{G}_{A_0\cdots A_{k-1}}$ is  $k$-odd and moreover has no cycles of length $jk$ where $j \geq 2$. This follows as the $j$th roots of $\pm 1$ are clearly not all real and positive for $j \geq 2$. 
\end{rem}

The next two results are some consequences of Theorem~\ref{main} needed here:

\begin{prop}
\label{propnotP0notPS}
Let $A$ be a real matrix. Then:
\[
\mathcal{Q}^{2k}(A) \subseteq \mathbf{PS} \,\,\, \Rightarrow\,\,\,\myd{G}_{(AA^{\mathrm{t}})^k}\,\,\mbox{ is $2k$-odd }\,\, \Leftrightarrow \,\,\, \mathcal{Q}^{2k}(A) \subseteq \mathbf{P_0}\,.
\]
\end{prop}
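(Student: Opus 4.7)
The proposition is essentially a specialisation of Theorem~\ref{main} to the alternating product $(AA^{\mathrm{t}})^k$, so the plan is short.

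First I would rewrite $\mathcal{Q}^{2k}(A)$ in the form required by Theorem~\ref{main}. Observe that since $\mathcal{Q}(A^{\mathrm{t}}) = \{B^{\mathrm{t}}\colon B \in \mathcal{Q}(A)\}$, the definition
\[
\mathcal{Q}^{2k}(A) = \{A_1 A_2^{\mathrm{t}} A_3 \cdots A_{2k}^{\mathrm{t}}\colon A_i \in \mathcal{Q}(A)\}
\]
coincides with the $2k$-fold product $\mathcal{Q}(A)\mathcal{Q}(A^{\mathrm{t}})\mathcal{Q}(A)\cdots \mathcal{Q}(A^{\mathrm{t}})$.

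Next, I would set $A_0 = A,\ A_1 = A^{\mathrm{t}},\ A_2 = A,\ \ldots,\ A_{2k-1} = A^{\mathrm{t}}$, so that the matrix-product $A_0 A_1 \cdots A_{2k-1}$ is exactly $(AA^{\mathrm{t}})^k$ and hence $\myd{G}_{A_0\cdots A_{2k-1}} = \myd{G}_{(AA^{\mathrm{t}})^k}$.

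With these identifications, applying Theorem~\ref{main}(1) (with $k$ there replaced by $2k$) immediately yields the equivalence
\[
\mathcal{Q}^{2k}(A) \subseteq \mathbf{P_0} \,\Leftrightarrow\, \myd{G}_{(AA^{\mathrm{t}})^k}\text{ is $2k$-odd},
\]
while applying Theorem~\ref{main}(2) yields the implication
\[
\mathcal{Q}^{2k}(A) \subseteq \mathbf{PS} \,\Rightarrow\, \myd{G}_{(AA^{\mathrm{t}})^k}\text{ is $2k$-odd}.
\]
Chaining these two gives the proposition. There is no real obstacle; the only thing that warrants comment is the bookkeeping in the first step, to make sure that allowing each $A_i$ to range over $\mathcal{Q}(A)$ independently really does produce the same matrix-set as letting the odd-indexed factors vary in $\mathcal{Q}(A)$ and the even-indexed factors vary independently in $\mathcal{Q}(A^{\mathrm{t}})$.
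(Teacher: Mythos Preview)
Your proposal is correct and matches the paper's own proof exactly: the paper simply states that the proposition is the specialisation of Theorem~\ref{main} to the case where consecutive matrices in the product are $A$ or $A^{\mathrm{t}}$. Your additional bookkeeping (identifying $\mathcal{Q}^{2k}(A)$ with the $2k$-fold product $\mathcal{Q}(A)\mathcal{Q}(A^{\mathrm{t}})\cdots\mathcal{Q}(A^{\mathrm{t}})$) is sound and, if anything, more explicit than the paper's one-line proof.
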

\begin{proof}
This is the specialisation of Theorem~\ref{main} where consecutive matrices in the product are just $A$ or $A^{\mathrm{t}}$.
\qquad \qed \end{proof}

\begin{prop}
\label{propnotP0PS}
Let $B$ be a submatrix of $A \in \mathbb{R}^{n \times m}$. Then
\begin{enumerate}
\item $\mathcal{Q}^{2k}(B) \not \subseteq \mathbf{P_0}$ $\Rightarrow$ $\mathcal{Q}^{2k}(A) \not \subseteq \mathbf{P_0}$ 
\item $\mathcal{Q}^{2k}(B) \not \subseteq \mathbf{PS}$ $\Rightarrow$ $\mathcal{Q}^{2k}(A) \not \subseteq \mathbf{PS}$
\end{enumerate}
\end{prop}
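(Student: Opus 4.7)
The plan is to show both parts by a single embedding construction: given matrices $B_1, \ldots, B_{2k} \in \mathcal{Q}(B)$ whose product $M = B_1 B_2^{\mathrm{t}} \cdots B_{2k}^{\mathrm{t}}$ fails to lie in $\mathbf{P_0}$ (resp.~$\mathbf{PS}$), I will build matrices $\tilde{A}_1, \ldots, \tilde{A}_{2k} \in \mathcal{Q}_0(A)$ such that $\tilde{A}_1 \tilde{A}_2^{\mathrm{t}} \cdots \tilde{A}_{2k}^{\mathrm{t}}$ is an $n \times n$ block-supported matrix that contains $M$ as a principal submatrix and is zero elsewhere. By Remark~\ref{remP0PSclosed}, having an example in $\mathcal{Q}_0^{2k}(A)$ outside $\mathbf{P_0}$ (resp.~$\mathbf{PS}$) suffices.

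Concretely, write $B = A(\alpha|\beta)$ with $\alpha \subseteq \{1,\ldots,n\}$, $\beta \subseteq \{1,\ldots,m\}$. For each $B_i \in \mathcal{Q}(B)$, define $\tilde{A}_i \in \mathbb{R}^{n \times m}$ by placing $B_i$ in the $\alpha \times \beta$ positions and zero elsewhere. Since $B_i$ shares the sign-pattern of $B = A(\alpha|\beta)$ on those entries and the remaining entries of $\tilde{A}_i$ are zero, we have $\tilde{A}_i \in \mathcal{Q}_0(A)$. A straightforward induction on the length of the product then shows that $\tilde{A}_1 \tilde{A}_2^{\mathrm{t}} \cdots \tilde{A}_{2k}^{\mathrm{t}}$ is supported on the $\alpha \times \alpha$ block and agrees there with $M$: the row support stays in $\alpha$ throughout, and at each multiplication the column support alternates between $\beta$ and $\alpha$ in line with whether we have multiplied by $\tilde{A}_i$ or $\tilde{A}_i^{\mathrm{t}}$.

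From this structure, the conclusions follow cleanly. For (1), any principal minor $\tilde{M}[\gamma']$ is zero unless $\gamma' \subseteq \alpha$ (else a row of the submatrix is identically zero), in which case it equals the corresponding principal minor of $M$; so a negative principal minor of $M$ lifts to a negative principal minor of $\tilde{M}$. For (2), the characteristic polynomial of $\tilde{M}$ is $\lambda^{n-|\alpha|}$ times the characteristic polynomial of $M$, so $\mathrm{spec}\,\tilde{M} = \mathrm{spec}\,M \cup \{0\}$ (as multisets), and any eigenvalue of $M$ outside $\mathbb{R}_{\geq 0}$ persists in $\tilde{M}$.

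There is no serious obstacle; the only point requiring attention is the bookkeeping that justifies the block-supported form of the product, but this is immediate from the definitions of $\tilde{A}_i$.
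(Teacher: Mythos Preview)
Your argument is correct. For part~(2) it coincides with the paper's proof essentially verbatim: the paper also embeds each $B_i$ into the $\alpha\times\beta$ block of a matrix $A_i\in\mathcal{Q}_0(A)$, observes that the resulting product $\tilde A$ is supported on $\alpha\times\alpha$ with $\tilde A(\alpha|\alpha)=\tilde B$, and reads off $\det(\lambda I-\tilde A)=\lambda^{n-|\alpha|}\det(\lambda I-\tilde B)$ before invoking closedness of $\mathbf{PS}$.

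For part~(1), however, the paper takes a different route: rather than the direct embedding, it invokes the equivalence from Proposition~\ref{propnotP0notPS} that $\mathcal{Q}^{2k}(B)\subseteq\mathbf{P_0}$ iff $\myd{G}_{(BB^{\mathrm{t}})^k}$ is $2k$-odd, notes that $\myd{G}_{(BB^{\mathrm{t}})^k}$ is a subgraph of $\myd{G}_{(AA^{\mathrm{t}})^k}$, and concludes via the same equivalence applied to $A$. Your approach is more self-contained and uniform---one construction handles both parts, and you avoid appealing to the digraph machinery of Theorem~\ref{main}. The paper's approach, on the other hand, exploits that machinery (already in place for other purposes) to get part~(1) in one line without any matrix computation, and it sidesteps the passage through $\mathcal{Q}_0$ and Remark~\ref{remP0PSclosed} for that part. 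Either is perfectly adequate here.
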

\begin{proof}
(1) By Proposition~\ref{propnotP0notPS}, if $\mathcal{Q}^{2k}(B) \not \subseteq \mathbf{P_0}$ then $\myd{G}_{(BB^{\mathrm{t}})^k}$ fails to be $k$-odd. But $\myd{G}_{(BB^{\mathrm{t}})^k}$ is a subgraph of $\myd{G}_{(AA^{\mathrm{t}})^k}$, and thus $\myd{G}_{(AA^{\mathrm{t}})^k}$ fails to be $k$-odd which, by Proposition~\ref{propnotP0notPS}, implies that $\mathcal{Q}^{2k}(A) \not \subseteq \mathbf{P_0}$. 

(2) Let $B=A(\alpha|\beta)$ where $\emptyset \neq \alpha \subseteq \{1, \ldots, n\}$ and $\emptyset \neq \beta \subseteq \{1, \ldots, m\}$. Suppose $\mathcal{Q}^{2k}(B) \not \subseteq \mathbf{PS}$ and choose $B_i \in \mathcal{Q}(B)$ for $i = 1, \ldots, 2k$ such that $\tilde B \stackrel{\text{\tiny def}}{=} B_1B_2^{\mathrm{t}}\cdots B_{2k-1}B_{2k}^{\mathrm{t}} \not \in \mathbf{PS}$. Define $A_i \in \mathcal{Q}_0(A)$ via $A_i(\alpha|\beta)=B_i$ and $(A_i)_{jl} = 0$ if $j \not \in \alpha$ or $l \not \in \beta$, and let $\tilde A \stackrel{\text{\tiny def}}{=} A_1A_2^{\mathrm{t}}\cdots A_{2k-1}A_{2k}^{\mathrm{t}}$. Clearly $\tilde A(\alpha|\alpha) = \tilde B$ and $\tilde A_{ij} = 0$ if $i \not \in \alpha$ or $j \not \in \alpha$, and consequently 
\[
\mathrm{det}(\lambda I - \tilde A) = \lambda^{n-|\alpha|}\mathrm{det}(\lambda I - \tilde B)\,.
\]
Thus the nonzero spectrum of $\tilde A$ is just that of $\tilde B$, and consequently $\tilde A \not \in \mathbf{PS}$. Since $\mathbf{PS}$ is closed and $\tilde A \in \mathcal{Q}_0^{2k}(A)$, $\mathcal{Q}^{2k}(A) \not \subseteq \mathbf{PS}$.
\qquad \qed \end{proof}


\subsection{$P_0$ matrix-products and the bipartite graph}

We now develop sufficient conditions for $\myd{G}_{(AA^{\mathrm{t}})^k}$ to be $k$-odd based on examination of $\myd{\Gamma}_A$, a smaller and more natural object to associate with $A$. In order to do this we need to be able to relate cycles in $G_{(AA^{\mathrm{t}})^k}$ with closed walks on $\Gamma_A$. Proposition~\ref{sbadrealcycle} below tells us how to identify those closed walks on $\Gamma_A$ which are the projections of cycles in $G_{(AA^{\mathrm{t}})^k}$. The following construction is convenient:

\begin{definition}[$k$-labelled walk]
Given a graph $\Gamma$, and $j,k \in \mathbb{N}$, let $W = (v_0,\,v_1,\,\cdots, v_{j} = v_0)$ be a closed walk on $\Gamma$. Now for $t = 0, \ldots, j-1$, assign to vertex $v_t$ the label $t \bmod k$, so that at the end of the walk, vertex $v_t$ has a list of labels $l_t$, each belonging to $\{0, \ldots, k-1\}$. If vertex $v_t$ occurs $r$ times in the list $(v_0, \ldots, v_{j-1})$ then $l_t$ is a list of $r$ labels, which may or may not all be distinct. We refer to the list of labelled vertices
\[
W_k \stackrel{\text{\tiny def}}{=} ((v_0, l_0),\,(v_1,l_1),\,\cdots, )
\]
as a $k$-labelled walk. 
\end{definition}

\begin{definition}[{\bf $\mathbf{\mathit k}$-repeating, repeating}]
Given $k \in \mathbb{N}$, a closed walk $W$ on a graph $\Gamma$ is {\bf $\mathbf{\mathit k}$-repeating} if some vertex in the $k$-labelled walk $W_k$ has a repeated label. In other words, $W$ has a closed subwalk $W' \neq W$ of length a positive multiple of $k$. A graph $\Gamma$ is {\bf $\mathbf{\mathit k}$-repeating} if for all $j \geq 2$, every closed walk on $\Gamma$ of length $jk$ is $k$-repeating. $\Gamma$ is {\bf repeating} if it is $k$-repeating for all even $k \in \mathbb{N}$. 
\end{definition}

\begin{rem}
\label{rem1rep}
If $\Gamma$ is bipartite, then any closed walk on $\Gamma$ has even length. Thus all closed walks on a bipartite graph are $2$-repeating except those where no vertex is revisited, namely cycles and trivial walks of the form $(uvu)$. 
\end{rem}

\begin{prop}
\label{sbadrealcycle}
A closed walk $W$ on $\Gamma_A$ of length $2jk$ is $2k$-repeating if and only if there does not exist a cycle $L$ in $G_{(AA^{\mathrm{t}})^k}$ such that $W = \pi(L)$. 
\end{prop}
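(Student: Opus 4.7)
My approach is to construct a canonical \emph{lifting} of the closed walk $W$ on $\Gamma_A$ to a closed walk in the block-circulant digraph $G_{(AA^{\mathrm t})^k}$, and show that this lift is a cycle exactly when $W$ is not $2k$-repeating. The key structural fact I would exploit is that the vertex-set of $G_{(AA^{\mathrm t})^k}$ splits into $2k$ layers, with $X$-vertices at even layers $0, 2, \ldots, 2k-2$ and $Y$-vertices at odd layers $1, 3, \ldots, 2k-1$; every edge increases the layer by one modulo $2k$; and for each edge $X_iY_j$ of $\Gamma_A$ the digraph $G_{(AA^{\mathrm t})^k}$ contains a directed edge between the two copies at every pair of consecutive layers in the allowed direction.

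Writing $W = (v_0, v_1, \ldots, v_{2jk} = v_0)$, I would choose $l_0 \in \{0,1\}$ so that $v_0$ lies in the partition of layer $l_0$, and define the lift $L$ by placing $v_t$ at layer $l_0 + t \bmod 2k$ for $0 \le t \le 2jk - 1$. The structural fact above guarantees that $L$ is a legitimate closed walk in $G_{(AA^{\mathrm t})^k}$ with $\pi(L) = W$. Moreover $L$ has a repeated vertex, and so fails to be a cycle, precisely when $(v_t, l_0 + t \bmod 2k) = (v_{t'}, l_0 + t' \bmod 2k)$ for some $0 \le t < t' \le 2jk - 1$, equivalently when $v_t = v_{t'}$ and $t \equiv t' \pmod{2k}$. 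This is exactly the statement that some vertex of $W_{2k}$ carries a repeated label, i.e.\ that $W$ is $2k$-repeating. Hence $L$ is a cycle if and only if $W$ is not $2k$-repeating, which already yields the ``if'' half of the proposition: when $W$ is not $2k$-repeating, $L$ is a cycle with $\pi(L) = W$.

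For the contrapositive I would argue that any cycle $L' \in G_{(AA^{\mathrm t})^k}$ with $\pi(L') = W$ must have its layer sequence of the form $l_0' + t \bmod 2k$, since edges always step the layer forward by one. Thus $L'$ differs from $L$ only by a global additive shift of layers, and the criterion ``some occurrence of a vertex repeats at the same layer'' is invariant under such a shift. Consequently, if $W$ is $2k$-repeating then neither $L$ nor any other such $L'$ can be a cycle. The main care required is the parity bookkeeping that ensures the undirected walk $W$ lifts to a directed walk respecting edge orientations, together with the careful distinction between a vertex of $\Gamma_A$ and its various occurrences in $W$; once this is in hand, the proposition reduces to a direct arithmetic statement about residues modulo $2k$.
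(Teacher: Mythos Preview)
Your proposal is correct and follows essentially the same approach as the paper: both construct a lift of $W$ to a closed walk in $G_{(AA^{\mathrm t})^k}$ using the layer structure, and identify the condition for the lift to be a cycle with the non-$2k$-repeating property via the residue of the index modulo $2k$. The only cosmetic difference is that for the forward direction the paper argues directly from injectivity of $\pi$ on each partition block (so any lift of a $2k$-repeating walk has a repeated vertex), whereas you route through the observation that all lifts differ by a global layer shift; these are equivalent formulations of the same idea.
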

\begin{proof}
Let $W = (v_0, v_1, \ldots, v_{2jk} = v_0)$ be a closed walk in $\Gamma_A$ and let $L = (u_0, u_1, \ldots, u_{2jk})$ be any walk in $G = G_{(AA^{\mathrm{t}})^k}$ such that $W = \pi(L)$. If $W$ is $2k$-repeating, then there exist $i$ and $0 < r < j$ such that $v_i = v_{i+2rk}$, i.e., $\pi(u_i) = \pi(u_{i+2rk})$. But $u_i$ and $u_{i+2rk}$ both belong to the same member of the partition of $V(G)$, and the projection $\pi$ restricted to any member of the partition is injective, so in fact $u_i = u_{i+2rk}$. Consequently, $L$ is not a cycle. Conversely, as $|W|$ is a multiple of $2k$, it is easy to see that there exists a closed walk $L$ in $G$ such that $W = \pi(L)$. Suppose some such $L$ is not a cycle, so there exist $i$ and $0 < r < j$ such that $u_i = u_{i+2rk}$ (the instances of a repeated vertex belong to the same member of the partition and so appear a multiple of $2k$ apart); trivially $\pi(u_i) = \pi(u_{i+2rk})$ and $W$ is $2k$-repeating. 
\qquad \qed \end{proof}

We now have the following corollary of the claim in Proposition~\ref{propnotP0notPS} that $\mathcal{Q}^{2k}(A) \subseteq \mathbf{P_0}$ if and only if $\myd{G}_{(AA^{\mathrm{t}})^k}$ is $2k$-odd:

\begin{prop}
\label{kgoodor2kodd}
$\mathcal{Q}^{2k}(A) \subseteq \mathbf{P_0}$ if and only if every closed walk of length $2jk$ ($j \in \mathbb{N}$) on $\myd{\Gamma}_A$ is either $2k$-repeating or $2k$-odd. 
\end{prop}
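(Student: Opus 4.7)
The plan is to chain Proposition~\ref{propnotP0notPS} with Proposition~\ref{sbadrealcycle} via the projection $\pi$. Proposition~\ref{propnotP0notPS} already gives $\mathcal{Q}^{2k}(A) \subseteq \mathbf{P_0}$ iff $\myd{G}_{(AA^{\mathrm{t}})^k}$ is $2k$-odd, i.e., iff every cycle of $\myd{G}_{(AA^{\mathrm{t}})^k}$ has $2k$-weight equal to $1$. Because $\myd{G}_{(AA^{\mathrm{t}})^k}$ has $2k$-block circulant structure, each such cycle automatically has length $2jk$ for some $j \in \mathbb{N}$. So the task reduces to re-expressing ``every cycle of $\myd{G}_{(AA^{\mathrm{t}})^k}$ is $2k$-odd'' as a condition on closed walks in $\myd{\Gamma}_A$.

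First I would record the elementary fact that $\pi$ preserves lengths and edge weights: by construction each edge of $\myd{G}_{(AA^{\mathrm{t}})^k}$ corresponds to a unique nonzero entry of $A$ and inherits exactly the weight of its image in $\myd{\Gamma}_A$. Hence for any walk $L$ in $\myd{G}_{(AA^{\mathrm{t}})^k}$, we have $|\pi(L)|=|L|$ and $w(\pi(L))=w(L)$, and therefore $w_{2k}(\pi(L))=w_{2k}(L)$. In particular, a cycle $L$ of length $2jk$ in $\myd{G}_{(AA^{\mathrm{t}})^k}$ is $2k$-odd iff $\pi(L)$ is $2k$-odd.

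Next I would apply Proposition~\ref{sbadrealcycle}, which provides the exact dictionary needed: a closed walk $W$ of length $2jk$ on $\myd{\Gamma}_A$ is the projection of some cycle of $G_{(AA^{\mathrm{t}})^k}$ iff $W$ is \emph{not} $2k$-repeating, and (as noted in the proof of that proposition) every closed walk of length $2jk$ on $\myd{\Gamma}_A$ lifts to at least one closed walk in $G_{(AA^{\mathrm{t}})^k}$. Combined with weight-preservation, this says the multiset of $2k$-weights of cycles in $\myd{G}_{(AA^{\mathrm{t}})^k}$ coincides with the multiset of $2k$-weights of non-$2k$-repeating closed walks of length $2jk$ on $\myd{\Gamma}_A$. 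Consequently $\myd{G}_{(AA^{\mathrm{t}})^k}$ is $2k$-odd iff every non-$2k$-repeating closed walk of length $2jk$ on $\myd{\Gamma}_A$ is $2k$-odd, which is exactly the condition that every closed walk of length $2jk$ on $\myd{\Gamma}_A$ is either $2k$-repeating or $2k$-odd. Chaining with Proposition~\ref{propnotP0notPS} yields the proposition.

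I expect no genuine obstacle: the proposition is essentially a repackaging of Proposition~\ref{sbadrealcycle} together with the observation that $\pi$ preserves $2k$-weights. The only care needed is the bookkeeping between the two possible ``failure modes'' (a closed walk in $\myd{\Gamma}_A$ either being $2k$-repeating, hence invisible as a cycle upstairs, or being $2k$-odd, hence harmless), and the mild check that every cycle of $\myd{G}_{(AA^{\mathrm{t}})^k}$ has length a multiple of $2k$; both follow directly from the block-circulant construction of $\myd{G}_{(AA^{\mathrm{t}})^k}$.
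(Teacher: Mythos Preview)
Your proof is correct and follows essentially the same approach as the paper: reduce to the $2k$-oddness of $\myd{G}_{(AA^{\mathrm{t}})^k}$ via Proposition~\ref{propnotP0notPS}, then use Proposition~\ref{sbadrealcycle} together with weight-preservation under $\pi$ to translate this into the stated condition on closed walks in $\myd{\Gamma}_A$. The only very minor quibble is that the ``multiset'' claim is stronger than what you actually need or establish (the correspondence between cycles upstairs and non-$2k$-repeating walks downstairs need not be a bijection), but your conclusion only requires that each side surjects onto the other with matching $2k$-weights, which is exactly what Proposition~\ref{sbadrealcycle} provides.
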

\begin{proof}
The conclusion follows from Proposition~\ref{propnotP0notPS} after we show that $\myd{G}_{(AA^{\mathrm{t}})^k}$ is $2k$-odd if and only if every closed walk of length $2jk$ ($j \in \mathbb{N}$) on $\myd{\Gamma}_A$ is either $2k$-repeating or $2k$-odd. 

By proposition~\ref{sbadrealcycle} every closed walk of length $2jk$ on $\myd{\Gamma}_A$ which fails to be $2k$-repeating is the projection of a cycle of length $2jk$ in $\myd{G}_{(AA^{\mathrm{t}})^k}$. Thus if $\myd{G}_{(AA^{\mathrm{t}})^k}$ is $2k$-odd, then every closed walk of length $2jk$ on $\myd{\Gamma}_A$ which fails to be $2k$-repeating must be $2k$-odd. Conversely, suppose every closed walk of length $2jk$ on $\myd{\Gamma}_A$ which fails to be $2k$-repeating is $2k$-odd. Since every cycle in $\myd{G}_{(AA^{\mathrm{t}})^k}$ projects to such a walk, $\myd{G}_{(AA^{\mathrm{t}})^k}$ is $2k$-odd.
\qquad \qed \end{proof}

The machinery so far allows a rapid proof of Theorem~\ref{thmtwoprodP0}, which is also easily inferred from Corollary~13 of \cite{banajicraciun}, or directly from \cite{banajirutherford1}.

\begin{center}
\fbox{%
\parbox{7cm}{%
{\bf Theorem~\ref{thmtwoprodP0}.} $\mathcal{Q}^{2}(A) \subseteq \mathbf{P_0}$ $\Leftrightarrow$ $\myd{\Gamma}_A$ is 2-odd.
\hfil}}
\end{center}

\begin{pot1}
A closed walk in $\Gamma_A$ fails to be $2$-repeating if and only if it is a cycle or is of the form $(uvu)$ (Remark~\ref{rem1rep}); the latter are trivially $2$-odd. Thus $\myd{\Gamma}_A$ is $2$-odd if and only if every closed walk of even length on $\myd{\Gamma}_A$ is either $2$-repeating or $2$-odd. Applying Proposition~\ref{kgoodor2kodd} with $k=1$, this is equivalent to $\mathcal{Q}^{2}(A) \subseteq \mathbf{P_0}$.
\qquad \qed \end{pot1}

We now state a sufficient condition on the bipartite graph of a matrix $A$ to ensure that $\mathcal{Q}^{2k}(A) \subseteq \mathbf{P_0}$. This criterion is central to the proofs of Theorems~\ref{thm_main_forest}~and~\ref{thm_main_caterpillar}.
\begin{prop}
\label{P_0prods}
Let $A \in \mathbb{R}^{n \times m}$ and fix $k \in \mathbb{N}$. Suppose that (i) Every closed walk on $\myd{\Gamma}_A$ has weight $0$, and (ii) $\myd{\Gamma}_A$ is $2k$-repeating. Then $\mathcal{Q}^{2k}(A) \subseteq \mathbf{P_0}$ and $\mathcal{Q}^{2k}(A^{\mathrm{t}})\subseteq \mathbf{P_0}$.
\end{prop}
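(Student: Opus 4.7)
The plan is to reduce the statement to the criterion in Proposition~\ref{kgoodor2kodd}: $\mathcal{Q}^{2k}(A) \subseteq \mathbf{P_0}$ holds if and only if every closed walk of length $2jk$ on $\myd{\Gamma}_A$ is either $2k$-repeating or $2k$-odd. I would therefore verify this condition on closed walks directly from hypotheses (i) and (ii), splitting according to the value of $j$.

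For $j \geq 2$, hypothesis (ii) is designed precisely for this range: by definition of a $2k$-repeating graph, every closed walk of length $2jk$ with $j \geq 2$ is $2k$-repeating, and there is nothing further to check. The remaining case is $j = 1$, i.e., closed walks $W$ of length exactly $2k$. Such a walk can never be $2k$-repeating, because the position indices $0, 1, \ldots, 2k-1$ are pairwise distinct modulo $2k$, so no vertex of $W$ can accumulate a repeated label. To handle this case I would invoke hypothesis (i): since $w(W) = 0$ and $|W|/(2k) = 1$, the $2k$-weight of $W$ equals $1 + 0 \equiv 1 \pmod 2$, which makes $W$ a $2k$-odd walk. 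Combining the two cases and invoking Proposition~\ref{kgoodor2kodd} yields $\mathcal{Q}^{2k}(A) \subseteq \mathbf{P_0}$.

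For the claim $\mathcal{Q}^{2k}(A^{\mathrm{t}}) \subseteq \mathbf{P_0}$, I would observe that $\myd{\Gamma}_{A^{\mathrm{t}}}$ is the same weighted graph as $\myd{\Gamma}_A$ with the two parts of the bipartition simply relabelled. Closed walks, their weights, and their $2k$-repetitions are thus identical for the two graphs, so hypotheses (i) and (ii) transfer verbatim, and repeating the argument above (applied to $A^{\mathrm{t}}$ in Proposition~\ref{kgoodor2kodd}) gives the conclusion.

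I do not anticipate a genuine obstacle here: once Proposition~\ref{kgoodor2kodd} is in place the proof is essentially bookkeeping. The only point that merits emphasis is the role played by each hypothesis — (ii) disposes of all walks of length $\geq 4k$, while (i) is invoked solely to handle walks of the shortest length $2k$, which lie outside the scope of the $2k$-repeating condition.
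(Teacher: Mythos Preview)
Your proof is correct and follows essentially the same logic as the paper's: both split into the cases $j=1$ (handled by hypothesis (i), giving $2k$-oddness) and $j\geq 2$ (handled by hypothesis (ii), giving $2k$-repetition), and both finish by noting $\myd{\Gamma}_{A^{\mathrm{t}}}\cong\myd{\Gamma}_A$. The only difference is cosmetic: you invoke the packaged criterion of Proposition~\ref{kgoodor2kodd} directly, whereas the paper unpacks that criterion and argues via cycles in $\myd{G}_{(AA^{\mathrm{t}})^k}$ using Propositions~\ref{propnotP0notPS} and~\ref{sbadrealcycle}.
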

\begin{proof}
Let $\myd{G} = \myd{G}_{(AA^{\mathrm{t}})^k}$, and let $\pi$ be the projection from $\myd{G}$ to $\myd{\Gamma}_A$ described earlier. The result follows from Proposition~\ref{propnotP0notPS} if we show that (i) and (ii) imply that $\myd{G}$ is $2k$-odd. All cycles in $G$ have length which is a multiple of $2k$. Let $L$ be a cycle in $G$ of length $2jk$ and $W = \pi(L)$; clearly $W$ is a closed walk of length $2jk$ on $\myd{\Gamma}_A$ and $w_{2k}(W) = w_{2k}(L)$. By (ii), if $j > 1$, then $W$ is $2k$-repeating, and consequently, by Proposition~\ref{sbadrealcycle}, is not the projection of a cycle in $G$ contradicting the assumption. So $j=1$, namely, (ii) implies that there are no cycles in $G$ of length greater than $2k$.

By (i), $w(W) = 0$, so $w_{2k}(L) = w_{2k}(W) = j + w(W) \equiv 1 \pmod 2$. Since $L$ was arbitrary, $\myd{G}$ is $2k$-odd. The same argument works equally for $\mathcal{Q}^{2k}(A^{\mathrm{t}})$ since $\myd{\Gamma}_{A^{\mathrm{t}}} \cong \myd{\Gamma}_A$.
\qquad \qed \end{proof}

\subsection{Repeating properties of graphs}

This subsection contains claims about a graph $G$ including:
\begin{enumerate}
\item $G$ is $2$-repeating if and only if $G$ is a forest;
\item $G$ is $4$-repeating if and only if $G$ is a forest;
\item $G$ is $6$-repeating if and only if $G$ is $2s$-repeating for all $s \in \mathbb{N}$ if and only if $G$ is a caterpillar forest.
\end{enumerate}

\begin{prop}
\label{propinherit}
A graph which fails to be $2s$-repeating for some $s \in \mathbb{N}$, fails to be $2r$-repeating for all $r \geq s$. 
\end{prop}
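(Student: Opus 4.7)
The plan is to prove the contrapositive constructively: from any witness of failure at $2s$, I construct a witness of failure at each $2r$ with $r \geq s$. Suppose $G$ fails to be $2s$-repeating. By definition, there is a closed walk $W = (v_0, v_1, \ldots, v_{2sj} = v_0)$ of length $2sj$, for some $j \geq 2$, whose $2s$-labelling carries no repeated label at any vertex; equivalently, any two positions $t, t'$ with $t \equiv t' \pmod{2s}$ and $t \neq t'$ satisfy $v_t \neq v_{t'}$.

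For $r \geq s$, I will build a closed walk $W'$ on $G$ of length $2rj$ by ``stretching'' $W$. Concretely, $W'$ is a concatenation of $j$ blocks of length $2r$; block $i$ (for $i = 0, \ldots, j-1$) consists of the $i$th length-$2s$ segment of $W$, namely $v_{2si}, v_{2si+1}, \ldots, v_{2s(i+1)}$, followed by a spike of length $2(r-s)$ at $v_{2s(i+1)}$ that traverses the edge $\{v_{2s(i+1)-1},\, v_{2s(i+1)}\}$ back and forth $r-s$ times. The spike has even length and returns to $v_{2s(i+1)}$, so the blocks concatenate into a closed walk on $G$ of length $2rj$, with $j \geq 2$.

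It remains to verify that $W'$ fails to be $2r$-repeating. Since each label in $\{0,1,\ldots,2r-1\}$ occurs exactly once per block, collisions can only arise between distinct blocks. Examining the construction, the vertex at label $m$ in block $i$ equals $v_{2si+m}$ when $0 \leq m \leq 2s$; equals $v_{2s(i+1)}$ when $m > 2s$ and $m$ is even; and equals the spike endpoint $v_{2s(i+1)-1}$ when $m > 2s$ and $m$ is odd. In every one of these cases, changing $i$ shifts the relevant $W$-position by a nonzero multiple of $2s$, so the $2s$-non-repeating property of $W$ forces the $j$ vertices at label $m$ in $W'$ to be pairwise distinct.

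The main obstacle, and the reason for this particular choice of spike edge, is ensuring that the spike vertex itself participates uniformly in the cross-block distinctness argument. Reusing the last edge of the $W$-segment of each block (rather than an arbitrary edge incident at $v_{2s(i+1)}$) pins the spike vertex to $v_{2s(i+1)-1}$, which lies in a single residue class modulo $2s$ and therefore inherits distinctness directly from $W$. With this choice no further obstacles arise, and $W'$ is the required witness of failure of $G$ to be $2r$-repeating.
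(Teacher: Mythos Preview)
Your proof is correct and follows essentially the same approach as the paper: both pad each length-$2s$ block of the witness walk with back-and-forth traversals of an edge near the block boundary to produce a longer witness. The only differences are cosmetic---you insert $r-s$ back-and-forths in one step (on the last edge of each block) where the paper proceeds inductively from $s$ to $s+1$ with a single insertion (on the second-to-last edge).
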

\begin{proof}
The proof is inductive. Let $G$ be a graph which fails to be $2s$-repeating. Consequently there exist $j \geq 2$ and a closed walk $W = (v_0, v_1, \ldots, v_{2js} = v_0)$ which fails to be $2s$-repeating. In other words the $2s$-labelled walk $W_{2s}$ has no repeated label in any label-list. Now consider the walk $\tilde{W}$ constructed from $W$ by adding in a second copy of each pair of the form $v_{2rs-2},v_{2rs-1}$ ($r = 1, \ldots, j$) immediately after it occurs in $W$. Clearly $\tilde{W}$ is a closed walk of length $2j(s+1)$ in $G$. It is straightforward to see that $\tilde{W}$ fails to be $2(s+1)$-repeating. Consider the $2(s+1)$-labelled walk $\tilde{W}_{2(s+1)}$. For each $r$, the vertices $v_{2rs-2},v_{2rs-1}$ acquire new labels $2s$ and $2s+1$ respectively; all other label-sets remain the same. The only possible repeated labels in $\tilde{W}_{2(s+1)}$ are the new labels $2s$ or $2s+1$; but a label $2s$ or $2s+1$ is repeated in $\tilde{W}_{2(s+1)}$ if and only if $2s-2$ or $2s-1$ was repeated in $W_{2s}$, which did not occur by assumption. 
\qquad \qed \end{proof}

\begin{rem}
\label{reminherit}
Observe that if the graph $G$ in the proof of Proposition~\ref{propinherit} is weighted, then the walk $\tilde{W}$ is $2(s+1)$-odd if and only if $W$ is $2s$-odd since the added paths each traverse a single edge twice and thus have weight zero.
\end{rem}

\begin{prop}
\label{propcyclenotrep}
A cycle fails to be $2s$-repeating for all $s \in \mathbb{N}$.
\end{prop}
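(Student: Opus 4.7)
The plan is to reduce the claim to the base case $s = 1$ by invoking Proposition~\ref{propinherit}, which guarantees that if a graph fails to be $2s$-repeating for some $s$, it automatically fails to be $2r$-repeating for every $r \geq s$. Thus it suffices to prove that an arbitrary cycle $C_n$ (with $n \geq 3$) fails to be $2$-repeating, i.e., to exhibit a closed walk of length $2j$ for some $j \geq 2$ on $C_n$ in which no vertex picks up a repeated label under the associated $2$-labelling.

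The natural candidates are the ``around-the-cycle'' walks, and a brief parity check suggests splitting on the parity of $n$. Write the cycle as $(v_0, v_1, \ldots, v_{n-1}, v_0)$. When $n$ is even (so $n \geq 4$), I will take the once-around walk of length $n = 2(n/2)$, giving $j = n/2 \geq 2$; each $v_i$ then appears exactly once, at position $i$, receiving the single label $i \bmod 2$, so there can be no repeated labels. When $n$ is odd, I will instead take the twice-around walk of length $2n$, giving $j = n \geq 3$; vertex $v_i$ now occurs at positions $i$ and $n+i$, whose labels modulo $2$ are $i \bmod 2$ and $(n+i) \bmod 2$, which differ since $n$ is odd, so once again no repeated label occurs. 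In either case the walk so constructed witnesses that $C_n$ is not $2$-repeating, and Proposition~\ref{propinherit} finishes the job.

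I expect no real obstacle here. The only mild subtlety is that neither ``once around'' nor ``twice around'' works uniformly: the former has odd length when $n$ is odd (so is not of the required form $2j$), while the latter places every vertex at two positions of the same parity when $n$ is even (producing an immediate label repeat). The parity split above dodges both issues simultaneously.
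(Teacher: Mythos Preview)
Your proof is correct and follows essentially the same approach as the paper's own argument: reduce to the base case $s=1$ via Proposition~\ref{propinherit}, then split on the parity of the cycle length, using the once-around walk when $n$ is even and the twice-around walk when $n$ is odd. Your explicit parity check on the labels in the odd case is a welcome elaboration of a step the paper leaves implicit.
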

\begin{proof}
Let $C = (v_0, v_1, \ldots, v_k = v_0)$ be a cycle. $C$ fails to be $2$-repeating: if $C$ is of even length it has length at least $4$ and is itself a closed walk of even length which trivially fails to be $2$-repeating (since no vertex is visited more than once). Otherwise if $C$ has odd length, the walk $W = (v_0, v_1, \ldots, v_0, v_1, \ldots, v_0)$ which traverses $C$ twice is a closed walk of even length at least $4$ which fails to be $2$-repeating. The claim now follows from Proposition~\ref{propinherit}.
\qquad \qed \end{proof}

\begin{prop}
\label{forest4rep}
For a graph $G$, the following are equivalent:
\begin{enumerate}
\item $G$ is a forest.
\item $G$ is $2$-repeating.
\item $G$ is $4$-repeating.
\end{enumerate}
\end{prop}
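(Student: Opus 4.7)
The plan is to prove the three-way equivalence via $(1)\Rightarrow(2)$, $(1)\Rightarrow(3)$, and the reverse implications. The reverse implications are the easy part: if $G$ contains a cycle, then by Proposition~\ref{propcyclenotrep} that cycle carries a closed walk (hence a closed walk on $G$) failing to be $2s$-repeating for every $s$, so both (2) and (3) fail. Alternatively, $(3)\Rightarrow(2)$ follows at once from Proposition~\ref{propinherit}.

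For $(1)\Rightarrow(2)$ I would take a closed walk $W$ of length $2j\geq 4$ on a forest. Its underlying subgraph $T$ is a tree with at least two vertices, so it contains a leaf $u$ with unique neighbor $w$. Every occurrence of $u$ at some position $p$ forces $v_{p-1}=v_{p+1}=w$ (indices read cyclically), yielding a closed subwalk of length $2$; since $|W|\geq 4$ this is a proper subwalk, so $W$ is $2$-repeating.

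The hard direction is $(1)\Rightarrow(3)$, which I would argue by contradiction. Suppose $W$ is a closed walk of length $4j$ with $j\geq 2$ on a tree $T$ and $W$ is not $4$-repeating. Bipartiteness of $T$ forces the positions of any vertex in $W$ to share a common parity, so their residues mod $4$ lie entirely in $\{0,2\}$ or entirely in $\{1,3\}$; the non-$4$-repeating hypothesis then forces each vertex to appear at most twice. Summing gives $|V(T)|\geq 2j$, and combined with $|E(T)|\leq 2j$ (each edge is traversed at least twice) this pins $|V(T)|\in\{2j,2j+1\}$.

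In the subcase $|V(T)|=2j$, every vertex appears exactly twice, and the identity $\sum_{e\ni v}m_e=2c(v)=4$ forces every leaf-edge to have multiplicity $4$, contradicting the same identity applied at the leaf's neighbor (which has at least two incident edges, each of multiplicity $\geq 2$). In the subcase $|V(T)|=2j+1$, each edge is traversed exactly twice, and the identity forces every vertex to have degree at most $2$, so $T$ is a path $u_0-u_1-\cdots-u_{2j}$. Any closed walk on this path traversing each edge exactly twice must be a depth-first traversal from $v_0=u_s$---go to one endpoint and return, then to the other and return---since any earlier turn-around at an internal vertex would render the far endpoint unreachable. Reading off positions, $u_s$ (when internal) appears at $0$ and $2s$, while $u_i$ for $0<i<2j$ with $i\neq s$ appears at two positions differing by $2i$ or $4j-2i$; for $j\geq 2$ the vertex $u_2$ always exhibits a pair whose difference is a multiple of $4$, contradicting the assumption. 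The main obstacle is this DFS case: classifying the edge-exact-twice walks on a path and then verifying the multiple-of-$4$ claim uniformly across all starting positions $s$.
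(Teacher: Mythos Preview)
Your argument is correct and shares the paper's core strategy for $(1)\Rightarrow(3)$: bound each vertex's occurrences by two via parity mod $4$, deduce that the underlying tree is a path, and then locate a repeated mod-$4$ label on that path. Two differences are worth noting. First, your explicit treatment of the case $|V(T)|=2j$ (the leaf-edge multiplicity contradiction) actually supplies a detail the paper leaves implicit: the paper simply asserts that ``the degree of a vertex in $W'$ must be precisely the number of times it has been visited,'' which amounts to ruling out exactly this case. Second, the paper uses its convention that closed walks are taken up to cyclic shift to start the walk at an endpoint of the path, which eliminates your case analysis over the starting position $s$; invoking the same convention would let you drop that analysis and read off the repeated label directly from the back-and-forth walk. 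Your separate proof of $(1)\Rightarrow(2)$ is fine but redundant, since you already have $(1)\Rightarrow(3)\Rightarrow(2)$ via Proposition~\ref{propinherit}.
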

\begin{proof}
(1) $\Rightarrow$ (3): if $G$ is a forest, then it is $4$-repeating. To see this, let $G$ be a forest, fix $j \geq 2$ and let $W$ be any $4$-labelled closed walk on $G$ of length $4j$. Since each vertex gets either odd or even labels, if any vertex occurs more than two times in $W$ then it must get a repeated label and we are done. So suppose that no vertex occurs more than twice in $W$. Then (Remark~\ref{remwalkdegree}) no vertex has degree more than two in $W'$ the subgraph of $G$ underlying $W$. Since $W'$ is a connected subgraph of $G$, it is a tree, and since the maximum degree of any vertex in $W'$ is $2$, it is a path. Moreover the degree of a vertex in $W'$ must be precisely the number of times it has been visited in $W$; so $W'$ is necessarily a path of length $2j$, say $v_0\cdots v_{2j}$. The only walk from $v_0$ to $v_{2j}$ and back to $v_0$ which doesn't visit any vertex three times is $(v_0, v_1, \ldots, v_{2j-1}, v_{2j}, v_{2j-1}, \ldots, v_0)$ and so (upto a shift of initial vertex), this must be $W$. Clearly vertex $v_{2r}$ receives the same label twice for each $1 \leq r \leq j-1$. Since $j \geq 2$, there is at least one such vertex. (3) $\Rightarrow$ (2): if any graph $G$ is $4$-repeating, then it is $2$-repeating by Proposition~\ref{propinherit}. Finally, (2) $\Rightarrow$ (1): if any graph $G$ is $2$-repeating, then it contains no cycles by Proposition~\ref{propcyclenotrep} with $s = 1$, namely, it is a forest. 
\qquad \qed \end{proof}

\begin{lemma}
\label{notall6rep}
Not every forest is $6$-repeating. In particular, $T^*$ is not $6$-repeating. 
\end{lemma}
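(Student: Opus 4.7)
The plan is to exhibit an explicit closed walk $W$ on $T^*$ of length $12 = 6 \cdot 2$ (so $j = 2$) which fails to be $6$-repeating, i.e., in the $6$-labelled walk $W_6$, no vertex carries a repeated label. Since $T^*$ is itself a forest, this simultaneously establishes both assertions of the lemma.

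The natural candidate is a depth-first Euler tour of the ``doubled'' tree, starting from a leaf of the long path. Using the notation of the definition of $T^*$, take
\[
W = (R_1, R_2, R_3, R_4, R_5, R_4, R_3, R_6, R_7, R_6, R_3, R_2, R_1).
\]
This is a closed walk of length $12$: each of the $6$ edges of $T^*$ is traversed exactly twice, and consecutive terms are always adjacent. Note that it must have length at least $12$, since any closed walk on a tree traverses each of its edges an even number of times, and any closed walk visiting every vertex of $T^*$ that is non-$6$-repeating must visit the degree-$3$ vertex $R_3$ at least three times.

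Labelling the vertices at positions $0, 1, \ldots, 11$ with labels $0, 1, \ldots, 5, 0, 1, \ldots, 5$ respectively, the label-lists received at each vertex are:
\[
R_1\colon \{0\}, \quad R_2\colon \{1,5\}, \quad R_3\colon \{2,0,4\}, \quad R_4\colon \{3,5\}, \quad R_5\colon \{4\}, \quad R_6\colon \{1,3\}, \quad R_7\colon \{2\}.
\]
In each case the labels are distinct, so $W_6$ has no repeated label at any vertex, and hence $W$ is not $6$-repeating, establishing the lemma.

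The only delicate point is the choice of starting vertex: the branch vertex $R_3$ is visited three times and must therefore receive three pairwise-distinct labels modulo $6$, which constrains the spacings of the three visits. Starting the Euler tour at a leaf of the longer arm of $T^*$ places those three visits at positions $2, 6, 10$, which have distinct residues modulo $6$, and the remaining bookkeeping for the other vertices then works out automatically.
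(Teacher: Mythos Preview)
Your proof is correct and takes essentially the same approach as the paper: both exhibit an explicit closed walk of length $12$ on $T^*$ and check directly that no vertex receives a repeated label modulo $6$. Your walk is in fact the paper's walk up to the automorphism of $T^*$ that swaps the two length-$2$ branches at the central vertex (the paper traverses the ``vertical'' arm before the ``horizontal'' one, you do the reverse); the only caveat is that the labels $R_1,\ldots,R_7$ you invoke are internal \texttt{tikz} coordinate names not visible to the reader, so in a final version you should introduce your own vertex labels as the paper does.
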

\begin{proof}
Consider $T^*$ with vertices labelled as follows:

\begin{center}
\begin{tikzpicture}[domain=0:4,scale=0.6]

\path (0,0) coordinate(R1);
\path (1,0) coordinate(R2);
\path (2,0) coordinate(R3);
\path (3,0) coordinate(R4);
\path (4,0) coordinate(R5);
\path (2,1) coordinate(R6);
\path (2,2) coordinate(R7);

\draw[thick] (R1)--(R2);
\draw[thick] (R2)--(R3);
\draw[thick] (R3)--(R4);
\draw[thick] (R4)--(R5);
\draw[thick] (R3)--(R6);
\draw[thick] (R6)--(R7);

\fill[color=white] (R1) circle (8pt);
\fill[color=white] (R2) circle (8pt);
\fill[color=white] (R3) circle (8pt);
\fill[color=white] (R4) circle (8pt);
\fill[color=white] (R5) circle (8pt);
\fill[color=white] (R6) circle (8pt);
\fill[color=white] (R7) circle (8pt);

\node at (R1) {$v_1$};
\node at (R2) {$v_2$};
\node at (R3) {$v_3$};
\node at (R6) {$v_4$};
\node at (R7) {$v_5$};
\node at (R4) {$v_6$};
\node at (R5) {$v_7$};

\end{tikzpicture}
\end{center}
$W = (v_1v_2v_3v_4v_5v_4v_3v_6v_7v_6v_3v_2v_1)$ is a closed walk of length $12$ on $T^*$ which, by observation, fails to be $6$-repeating. 
\qquad \qed \end{proof}

The following theorem may be of some interest in its own right. 

\begin{thm}[Caterpillar intermediate value theorem]
\label{catIVT}
Let $W$ be a closed caterpillar walk on a graph $G$. Then $W$ includes a closed subwalk of length $2s$ for each $s = 1, \ldots, |W|/2$. 
\end{thm}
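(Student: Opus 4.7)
The plan is to project $W$ onto the caterpillar's spine and to apply a discrete intermediate-value argument in which the caterpillar hypothesis is essential. Let $L = |W|$, and fix a longest path $p_1 p_2 \cdots p_n$ in the caterpillar $C$ underlying $W$. Every vertex of $C$ off this path is a leaf adjacent to some $p_j$ with $2 \le j \le n-1$, so we may define $\sigma\colon V(C) \to \{1, \ldots, n\}$ by $\sigma(p_j) = j$ and $\sigma(\ell) = j$ for such an off-path leaf $\ell$. Set $s_i = \sigma(W_i)$, extended periodically by $s_{i+L} = s_i$. Adjacency in $C$ forces $|s_{i+1} - s_i| \le 1$. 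The case $s = L/2$ is witnessed by $W$ itself, so fix $1 \le s < L/2$; the aim is to produce $i$ with $W_i = W_{i+2s}$ (indices modulo $L$).

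Set $d_i = s_i - s_{i+2s}$. Then $\sum_{i=0}^{L-1} d_i = 0$ by cyclic shift, and $|d_{i+1} - d_i| \le 2$ in general. The key observation, following from the bipartition of $C$, is that $\sigma(W_i) + i$ is odd precisely when $W_i$ lies on the chosen longest path and even when $W_i$ is an off-path leaf. Consequently $d_i$ is odd iff exactly one of $W_i, W_{i+2s}$ is an off-path leaf; in that case the leaf has only one neighbour---its spine parent---which must be its successor in $W$, so one of the differences $s_{i+1} - s_i$ and $s_{i+2s+1} - s_{i+2s}$ vanishes, giving the sharpened bound $|d_{i+1} - d_i| \le 1$ whenever $d_i$ is odd.

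The discrete intermediate-value argument now runs as follows: if $d \not\equiv 0$ then the mean-zero condition gives $j$ with $d_j > 0$ and $d_{j+1} < 0$; the general jump bound forces $d_j = 1$ and $d_{j+1} = -1$; but then $d_j$ is odd, so $|d_{j+1} - d_j| \le 1$, a contradiction. Hence $d_i = 0$ for some $i$, so $s_i = s_{i+2s} = j$ say. Since $d_i$ is even, $W_i$ and $W_{i+2s}$ share on-path/off-path status: if both lie on the path they equal $p_j$, and the subwalk from index $i$ to $i + 2s$ is a closed subwalk of length $2s$; if both are off-path leaves of $p_j$, then $W_{i-1}$ and $W_{i-1+2s}$ are both forced to be $p_j$, and the closed subwalk of length $2s$ starts at index $i - 1$ (cyclically). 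The crux is the odd-$d_i$ lemma, precisely where the caterpillar hypothesis enters: in a non-caterpillar tree a non-leaf vertex may lie off every longest path, so the relevant coordinate step need not vanish---consistent with Lemma~\ref{notall6rep} exhibiting a counterexample on $T^*$.
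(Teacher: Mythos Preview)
Your proof is correct and follows essentially the same approach as the paper's: project onto the spine via $\sigma$ (the paper's $l$), form the difference function $d_i$ (the paper's $f_s$ with opposite sign), use the bipartite parity to detect when exactly one of the two vertices is off-spine, and run a discrete intermediate-value argument on the mean-zero cyclic sequence. One minor imprecision: the specific parity in ``$\sigma(W_i)+i$ is odd precisely when $W_i$ lies on the path'' depends on the choice of $W_0$ and $p_1$, but only the fact that this parity is constant on spine vertices and takes the opposite constant on off-path leaves is actually used, so the argument stands.
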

\begin{proof}
Let $W'$ be the subgraph of $G$ underlying $W$. The claim makes sense as $W'$ is bipartite and hence $|W|$ is even. Set $r = |W|/2$, and let $W = (v_0, v_1, \cdots v_{2r-1}, v_0)$. All indices on vertices $v_j$ are reduced modulo $2r$.

Let the vertices on some path of maximal length in $W'$ be $u_1, \ldots, u_n$ with $u_i$ adjacent to $u_{i\pm 1}$ for $i = 2, \ldots, n-1$. (Note that $u_1$ and $u_n$ may not be uniquely defined, but the argument is unaffected.) The path $(u_1, \ldots, u_n)$ will be the termed the ``spine'' of $W'$ with its vertices being ``spinal'' while the remaining vertices are ``non-spinal''. Define a vertex labelling $l:V(W') \to \{1, \ldots, n\}$ by $l(v_k) = i$ if either $v_k =u_i$ or $v_k$ is non-spinal but adjacent to $u_i$. By construction 
\begin{equation}
\label{eqdiff}
|l(v_{i+1}) - l(v_i)| \leq 1\,.
\end{equation}
For $s = 1, \ldots, r$ define $f_s\colon \{0, \ldots, 2r-1\} \to \{1, \ldots, n\}$ by $f_s(i) = l(v_{i+2s}) - l(v_i)$. It is straightforward that $f_s(i)$ is odd if and only if exactly one of $v_i$, $v_{i+2s}$ is spinal; otherwise there would exist a closed walk of odd length on $W'$, namely the path $(v_i, \cdots, v_{i+2s})$ followed by the shortest path from $v_{i+2s}$ to $v_i$. Also for each $s$, 
\begin{equation}
\label{sumzero}
\sum_{i=0}^{2r-1} f_s(i) = 0\,.
\end{equation}

Fix $s$ and $i$. Suppose $f_s(i) > 0$. Then either (i) $f_s(i) \geq 2$ in which case $f_s(i+1) \geq 0$ by (\ref{eqdiff}); or (ii) $f_s(i) = 1$ in which case, by the remark above, one of $v_i$, $v_{i+s}$ is spinal and one is non-spinal. In this case $f_s(i+1) \geq 0$ since one of the equalities $l(v_{i+1}) = l(v_i)$ or $l(v_{i+2s+1}) = l(v_{i+2s})$ holds. So in both cases $f_s(i) > 0$ implies $f_s(i+1) \geq 0$. Similarly $f_s(i) < 0$ implies $f_s(i+1) \leq 0$. 

Thus the function $f_s$ cannot take both positive and negative values without also taking the value $0$, and by (\ref{sumzero}), for each $s$, there must exist $j \in \{0, \ldots, 2r-1\}$ such that $f_s(j) = 0$. By the remark above, it is not possible that exactly one of $v_j$, $v_{j+2s}$ is spinal. If both $v_j$ and $v_{j+2s}$ are spinal then they must be the same vertex and $(v_j \cdots v_{j+2s})$ is a closed subwalk of $W$ of length $2s$. If both are non-spinal, then $v_{j-1}$ and $v_{j+2s-1}$ must be spinal and moreover $f_s(j-1) = 0$; $(v_{j-1} \cdots v_{j+2s-1})$ is then a closed subwalk of $W$ of length $2s$. Thus in each case there is a closed subwalk of $W$ of length $2s$. 
\qquad \qed \end{proof}

As a corollary of Theorem~\ref{catIVT} we have the following characterisation of caterpillars:
\begin{prop}
A connected graph $G$ is a caterpillar if and only if each closed walk $W$ on $G$ contains closed subwalks of length $2r$ for every integer $1 \leq r \leq |W|/2$.
\end{prop}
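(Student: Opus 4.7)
\emph{Forward direction.} Assuming $G$ is a caterpillar, I would apply Theorem~\ref{catIVT} to each closed walk $W$ on $G$ directly. The subgraph $W'$ underlying $W$ is a connected subgraph of $G$, hence a subtree of $G$; since caterpillars are exactly the trees avoiding $T^*$ as a subgraph, $W'$ (being a subgraph of $G$) also avoids $T^*$, so $W'$ is a caterpillar. Thus $W$ is a caterpillar walk and Theorem~\ref{catIVT} supplies closed subwalks of length $2r$ for each $1 \leq r \leq |W|/2$, as required.

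\emph{Converse, by contrapositive.} Suppose $G$ is connected but not a caterpillar. Then either (i) $G$ contains a cycle $C$ of some length $k \geq 3$, or (ii) $G$ is a tree containing a subgraph isomorphic to $T^*$. In case (i), I would take $W = C$ if $k$ is even and $W = CC$ (two consecutive traversals of $C$) if $k$ is odd. In either construction the repeated vertices of $W$ occur only at positions differing by a multiple of $k$, so every closed subwalk of $W$ has length a positive multiple of $k \geq 3$; in particular $W$ has no closed subwalk of length $2$, although $|W| \geq 4$ so $r = 1$ is in the required range.

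In case (ii), I would use the explicit closed walk $W = (v_1 v_2 v_3 v_4 v_5 v_4 v_3 v_6 v_7 v_6 v_3 v_2 v_1)$ of length $12$ on $T^*$ constructed in the proof of Lemma~\ref{notall6rep}, which fails to be $6$-repeating; by the definition of $k$-repeating this means $W$ has no proper closed subwalk of length a positive multiple of $6$. In particular $W$ has no closed subwalk of length $6$, even though $r = 3$ lies in the required range $1 \leq r \leq 6$. Since $T^*$ is a subgraph of $G$, the same walk (viewed on $G$) has the same deficiency. The only real issue anywhere in the argument is a purely notational one: translating the $k$-repeating language of Lemma~\ref{notall6rep} into the ``missing even-length subwalk'' language of the statement, and verifying that when one doubles an odd cycle the resulting walk picks up no new short closed subwalks. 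Both points are immediate from the definitions.
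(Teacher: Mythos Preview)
Your proof is correct and follows essentially the same route as the paper's: forward direction via Theorem~\ref{catIVT}, converse by splitting into the cycle case and the $T^*$ case, the latter handled via the explicit walk from Lemma~\ref{notall6rep}. The only minor divergence is in case~(i): the paper simply takes the cycle $C$ itself as the closed walk (of length $\geq 3$) and observes it has no proper closed subwalks, hence in particular none of length~$2$; your doubling of odd cycles is unnecessary, since even for odd $|W|\geq 3$ the value $r=1$ already lies in the range $1\leq r\leq |W|/2$.
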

\begin{proof}
Let $G$ be a connected graph. If $G$ is a caterpillar, then by Theorem~\ref{catIVT} any closed walk $W$  contains closed subwalks of length $2r$ for every integer $1 \leq r \leq |W|/2$. If $G$ is not a caterpillar, then either (i) it contains a cycle, and thus a closed walk of length $3$ or more with no closed subwalks (see also Proposition~\ref{propcyclenotrep}), or (ii) it contains the subgraph $T^*$ and thus a closed walk of length $12$ with no closed subwalk of length $6$ (proof of Lemma~\ref{notall6rep}). \qquad \qed \end{proof}

Phrased in terms of repeating properties of graphs we have:
\begin{prop}
\label{onlycatssgood}
For a graph $G$, the following are equivalent:
\begin{enumerate}
\item $G$ is a caterpillar forest.
\item $G$ is repeating.
\item $G$ is $6$-repeating.
\end{enumerate}
\end{prop}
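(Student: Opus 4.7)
The plan is to prove the cycle of implications $(1) \Rightarrow (2) \Rightarrow (3) \Rightarrow (1)$. The middle step $(2) \Rightarrow (3)$ is immediate from the definition, since being ``repeating'' means being $2s$-repeating for every $s \in \mathbb{N}$, and one just specialises to $s = 3$.

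For $(1) \Rightarrow (2)$, I would use Theorem~\ref{catIVT} as the engine. Fix $s \in \mathbb{N}$ and let $W$ be an arbitrary closed walk on a caterpillar forest $G$ of length $2js$ with $j \geq 2$. A preliminary observation is needed: the underlying subgraph $W'$ of $W$ is a connected subgraph of a forest, hence a subtree of a single caterpillar component; since its ambient caterpillar contains no copy of $T^*$, neither does $W'$, and so $W'$ is itself a caterpillar. Hence $W$ is a caterpillar walk, and Theorem~\ref{catIVT} produces a closed subwalk of $W$ of length $2s$. As $2s < 2js = |W|$, this is a proper subwalk, and the pair of equal vertices at its endpoints receives the same label in the $2s$-labelled walk $W_{2s}$. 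Thus $W$ is $2s$-repeating, and varying $W$ and $s$ shows that $G$ is repeating.

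For $(3) \Rightarrow (1)$, I would argue the contrapositive using the two obstructions already recorded. If $G$ is not a caterpillar forest, then (by the $T^*$-characterisation of caterpillar forests stated after the definition of $T^*$) either $G$ contains a cycle or $G$ is a forest containing $T^*$ as a subgraph. In the first case Proposition~\ref{propcyclenotrep} with $s = 3$ directly furnishes a closed walk on the cycle (hence on $G$) of length $6j$ for some $j \geq 2$ which fails to be $6$-repeating. In the second case Lemma~\ref{notall6rep} supplies an explicit closed walk of length $12$ on $T^*$ (hence on $G$) which is not $6$-repeating. Either way $G$ fails to be $6$-repeating.

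The main obstacle will be the $(1) \Rightarrow (2)$ direction, which hinges entirely on the Caterpillar Intermediate Value Theorem together with the minor but essential remark that a connected subgraph of a caterpillar is a caterpillar (so that Theorem~\ref{catIVT} applies to $W$ and not merely to the ambient $G$). Everything else is bookkeeping on top of the results already proved in this subsection.
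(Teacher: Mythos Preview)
Your proposal is correct and follows essentially the same route as the paper: the cycle $(1)\Rightarrow(2)\Rightarrow(3)\Rightarrow(1)$, with Theorem~\ref{catIVT} driving the first implication and Proposition~\ref{propcyclenotrep} together with Lemma~\ref{notall6rep} handling the contrapositive of the last. Your explicit remark that a connected subgraph of a caterpillar is again a caterpillar (so that $W$ really is a caterpillar walk) is a detail the paper leaves implicit, but otherwise the arguments coincide.
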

\begin{proof}
(1) $\Rightarrow$ (2). Suppose $G$ is a caterpillar forest, and given $s,j \in \mathbb{N}$ with $j \geq 2$, let $W = (v_1, \ldots, v_{2js})$ be a closed walk on $G$. Since $W$ is a closed caterpillar walk, by Theorem~\ref{catIVT}, $W$ includes a closed subwalk of length $2s < 2js$. Thus $W$ is $2s$-repeating, and since $s,j$ were arbitrary, $G$ is repeating. (2) $\Rightarrow$ (3). If $G$ is repeating, then by definition it is $6$-repeating. (3) $\Rightarrow$ (1). If $G$ is not a caterpillar forest then either (i) it contains a cycle in which case it fails to be $6$-repeating by Proposition~\ref{propcyclenotrep}; or (ii) it contains the subgraph $T^*$ which, by Lemma~\ref{notall6rep}, fails to be $6$-repeating. 
\qquad \qed \end{proof}

\section{Proof of Theorem~\ref{thm_main_forest}}

\begin{definition}
For $A \in \mathbb{R}^{n \times m}$, define
\[
\mathcal{Q}'(A) \stackrel{\text{\tiny def}}{=} \{DAE \colon D,E\,\,\mbox{are positive diagonal matrices}\}\,.
\]
\end{definition}
Clearly $\mathcal{Q}'(A) \subseteq \mathcal{Q}(A)$. In fact:
\begin{thm}
\label{treequalc}
$\mathcal{Q}'(A) = \mathcal{Q}(A)$ if and only if $\Gamma_A$ is a forest.
\end{thm}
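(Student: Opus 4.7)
The plan is to reformulate the equation $DAE = B$ entrywise and take logarithms, which converts everything into a linear system of equations indexed by the edges of $\Gamma_A$. Specifically, given $B \in \mathcal{Q}(A)$, positive diagonal matrices $D = \mathrm{diag}(D_{11},\ldots,D_{nn})$ and $E = \mathrm{diag}(E_{11},\ldots,E_{mm})$ satisfy $DAE = B$ if and only if $D_{ii}A_{ij}E_{jj} = B_{ij}$ for every $(i,j)$ with $A_{ij} \neq 0$ (entries where $A_{ij}=0$ are automatic since $B_{ij}=0$ also, as $B \in \mathcal{Q}(A)$). Setting $d_i = \log D_{ii}$, $e_j = \log E_{jj}$, and $c_{ij} = \log|B_{ij}| - \log|A_{ij}|$, the problem reduces to solving
\[
d_i + e_j = c_{ij} \qquad \text{for every edge } X_iY_j \in E(\Gamma_A).
\]
Thus $\mathcal{Q}'(A) = \mathcal{Q}(A)$ if and only if, for every choice of real numbers $c_{ij}$ (one per edge of $\Gamma_A$), this system admits a solution.

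For the forward direction ($\Gamma_A$ a forest implies $\mathcal{Q}'(A) = \mathcal{Q}(A)$), I would handle each tree component of $\Gamma_A$ separately. Within a tree, pick any vertex as root and assign it the value $0$; then for every other vertex, use the unique path from the root to propagate a value, toggling the sign each step and adding the appropriate $c_{ij}$. Since the underlying graph is a tree there is no cycle to create a consistency obstruction, so this propagation always succeeds. Doing this over each component yields $(d_i)$ and $(e_j)$ solving the whole system, so the required $D,E$ exist.

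For the converse, I would assume $\Gamma_A$ contains a cycle $X_{i_1}Y_{j_1}X_{i_2}Y_{j_2}\cdots X_{i_k}Y_{j_k}X_{i_1}$ and derive a necessary cyclic condition on the $c_{ij}$. Taking the alternating sum of the equations $d_i+e_j = c_{ij}$ around the $2k$ edges of this cycle, the left-hand side telescopes to zero, so any solvable $c$ must satisfy
\[
c_{i_1j_1} - c_{i_2j_1} + c_{i_2j_2} - c_{i_3j_2} + \cdots - c_{i_1j_k} = 0.
\]
To produce a counterexample, define $B \in \mathcal{Q}(A)$ by $B_{i_1j_1} = 2A_{i_1j_1}$ and $B_{ij}=A_{ij}$ everywhere else. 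Then exactly one term in the alternating sum equals $\log 2$ and the rest vanish, so the sum is $\log 2 \neq 0$, no $(d_i),(e_j)$ can exist, and $B \in \mathcal{Q}(A) \setminus \mathcal{Q}'(A)$.

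I do not anticipate a serious obstacle here: the forward direction is essentially a tree-traversal argument and the reverse direction is a clean obstruction from one cycle. The only detail requiring a little care is recording that on a bipartite graph every cycle has even length (so the alternating sum is well-defined and its left-hand side in $d_i,e_j$ really does telescope to zero), and that the ``scaling'' counterexample indeed lies in $\mathcal{Q}(A)$ since multiplying one nonzero entry by a positive scalar preserves signs.
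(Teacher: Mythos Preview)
Your argument is correct. The route is close to the paper's but packaged differently: you linearise via logarithms and then treat both directions as statements about the solvability of the edge-indexed linear system $d_i + e_j = c_{ij}$, using tree traversal for existence and a telescoping cycle sum for the obstruction. The paper instead works directly in multiplicative form: for the forward direction it inducts on the size of $A$, repeatedly adjoining a row or column with at most one nonzero entry (i.e., adding a leaf or isolated vertex to $\Gamma_A$) and extending $D,E$ explicitly; for the converse it writes out the $2r$ multiplicative equations around a cycle and multiplies alternate ones to obtain the constraint $\prod_k P_{i_kj_k} = \prod_k P_{i_{k+1}j_k}$, which is exactly your alternating-sum condition before taking logs. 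The content is the same; your logarithmic reformulation is a tidy way to see at once that the question is whether the bipartite edge–vertex incidence matrix is surjective, which holds precisely when the graph is acyclic.
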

\begin{proof}
First, observe that given $A \in \mathbb{R}^{n \times m}$ we can write
\[
\mathcal{Q}(A) = \{P \circ A \colon P \in \mathbb{R}^{n \times m}, P_{ij} > 0\}\,,
\]
where $P \circ A$ means the entrywise product or ``Hadamard product'' of $P$ and $A$. 

{\bf 1) If $\Gamma_A$ is a forest then $\mathcal{Q}'(A) = \mathcal{Q}(A)$.} Since $\mathcal{Q}'(A) \subseteq \mathcal{Q}(A)$ we need only prove that $\mathcal{Q}(A) \subseteq \mathcal{Q}'(A)$. The proof is inductive. 

Note first that the result is trivially true if $A \in \mathbb{R}^{1 \times 1}$, i.e., if $\Gamma_A$ consists of a single pair of vertices, which may or may not be adjacent. Suppose that $A_0 \in\mathbb{R}^{n \times m}$ is such that $\mathcal{Q}'(A_0) = \mathcal{Q}(A_0)$, and consequently, for each positive $P \in \mathbb{R}^{n \times m}$, there exist positive diagonal matrices $D_P$ and $E_P$ such that
\[
P \circ A_0 = D_PA_0E_P\,.
\]
We proceed by augmenting $A_0$ with a new column containing at most one nonzero entry. Assume, without loss of generality, that this is the final column. 

\begin{enumerate}
\item[(a)] Define $A = [A_0|0]$. This amounts to creating $\Gamma_A$ from $\Gamma_{A_0}$ by adding an isolated vertex $Y_{m+1}$. Given any positive $P' \in \mathbb{R}^{n \times (m+1)}$, let $P$ be the matrix consisting of the first $m$ columns of $P'$, namely $P=P'(\{1, \ldots, n\}|\{1, \ldots, m\})$. Defining $D_{P'} = D_P$ and 
\[
E_{P'} = \left(\begin{array}{cc}E_P&0\\0&1\end{array}\right)\,,
\]
we easily compute that $P' \circ A = D_{P'}AE_{P'}$.
\item[(b)] Define $A = [A_0|te_k]$ for arbitrary $t \in \mathbb{R}$ and $k \in \{1, \dots, n\}$. This amounts to creating $\Gamma_A$ from $\Gamma_{A_0}$ by adding a leaf $Y_{m+1}$ adjacent to $X_k$. Given any positive $P' \in \mathbb{R}^{n \times (m+1)}$, let $P$ be defined as before. Then defining $D_{P'} = D_P$ and 
\[
E_{P'} = \left(\begin{array}{cc}E_P&0\\0&P'_{k,m+1}/(D_P)_{kk}\end{array}\right)\,,
\]
we compute that $P' \circ A = D_{P'}AE_{P'}$.
\end{enumerate}
Thus if $A_0$ is augmented with a column containing at most one nonzero entry to give a new matrix $A$, then $\mathcal{Q}'(A) = \mathcal{Q}(A)$. The argument where $A_0$ is augmented with a new row with at most one nonzero entry is similar. 

Clearly any matrix $A$ such that $\Gamma_A$ is a forest can be constructed from a $1 \times 1$ matrix but successively adding rows/columns with at most one nonzero entry (i.e., by successive addition of either leaves or isolated vertices to the bipartite graph). Thus, inductively, if $\Gamma_A$ is a forest then $\mathcal{Q}'(A) = \mathcal{Q}(A)$. 

{\bf 2) If $\mathcal{Q}'(A) = \mathcal{Q}(A)$ then $\Gamma_A$ is a forest.} If $\Gamma_A$ is not a forest then it contains a cycle of length $2r$ with $r \geq 2$, say $(X_{i_0}Y_{j_0}\cdots X_{i_{r-1}}Y_{j_{r-1}}X_{i_0})$. Given positive $P \in \mathbb{R}^{n \times m}$, a necessary condition for the existence of positive diagonal matrices $D$ and $E$ such that $P \circ A = DAE$ is that the $2r$ equations
\[
P_{i_0j_0} = D_{i_0i_0}E_{j_0j_0}, \,\,\,P_{i_1j_0} = D_{i_1i_1}E_{j_0j_0}, \,\,\cdots, P_{i_0j_{r-1}} = D_{i_0i_0}E_{j_{r-1}j_{r-1}}
\]
can all be satisfied. Taking products of alternate equations we get
\[
P_{i_0j_0}P_{i_1j_1}\cdots P_{i_{r-1}j_{r-1}} = P_{i_1j_0}P_{i_2j_1}\cdots P_{i_0j_{r-1}}\,,
\]
and choosing any $P$ not satisfying this equation gives us an element $P\circ A$ of $\mathcal{Q}(A)$ not in $\mathcal{Q}'(A)$. \qquad \qed \end{proof}

We can now prove Theorem~\ref{thm_main_forest} which we restate for readability:
\begin{center}
\fbox{%
\parbox{7cm}{%
{\bf Theorem~\ref{thm_main_forest}.} The following are equivalent:
\begin{itemize}
\item $\Gamma_A$ is a forest.
\item $\mathcal{Q}^4(A) \subseteq \mathbf{P_0}$. 
\item $\mathcal{Q}^{2}(A) \subseteq \mathbf{PS}$. 
\end{itemize}
}}
\end{center}

\begin{pot2}
Let $A \in \mathbb{R}^{n \times m}$. 

{\bf 1.} Suppose $\Gamma_A$ is a forest. Since $\Gamma_A$ is $4$-repeating by Proposition~\ref{forest4rep}, and all closed walks on $\Gamma_A$ have weight $0$ (see Remark~\ref{remclosedweight}), the claim that $\mathcal{Q}^4(A) \subseteq \mathbf{P_0}$ follows from Proposition~\ref{P_0prods} with $s=2$. Now consider arbitrary $A_1,A_2 \in \mathcal{Q}(A)$, so that $A_1A_2^{\mathrm{t}} \in \mathcal{Q}^2(A)$. From Theorem~\ref{treequalc} we can write $A_1A_2^{\mathrm{t}} = D_1AD_2A^{\mathrm{t}}D_3$ where $D_1,D_2,D_3$ are positive diagonal matrices. Defining $M = D_3^{1/2}D_1^{1/2}AD_2^{1/2}$ and $P = D_1^{1/2}D_3^{-1/2}$, it is easy to check that $A_1A_2^{\mathrm{t}} = PMM^{\mathrm{t}}P^{-1}$. In other words $A_1A_2^{\mathrm{t}}$ is (diagonally) similar to a positive semidefinite matrix. Since $A_1, A_2$ were arbitrary, the claim that $\mathcal{Q}^2(A) \subseteq \mathbf{PS}$ follows. 

{\bf 2.} Suppose now that $\Gamma_A$ is not a forest. $\Gamma_A$ must have a cycle, say $L = (v_1v_2\cdots v_{2r}v_1)$ ($r \geq 2$) and, by Proposition~\ref{propcyclenotrep}, $L$ is not $4$-repeating. Let $L_1$ be the walk obtained from $L$ by inserting $v_1v_2$ after $v_2$ in $L$; let $L_2$ be the walk obtained from $L_1$ by inserting $v_3v_4$ after $v_4$ in $L_1$; and if $r \geq 3$, let $L_3$ be obtained from $L_2$ by inserting $v_5v_6$ after $v_6$ in $L_2$. Observe that $w(L_i) = w(L)$ for each $i$: this follows since the added path at each stage traverses the same edge twice and thus has weight zero. Observe also that each of $L_1$ and $L_2$, and $L_3$ if defined, fails to be $4$-repeating. 

\begin{enumerate}
\item[(a)] If $w(L) = 0$, we define a new closed walk $L'$ as follows: if $|L| \equiv 0 \bmod 8$, let $L' = L$; if $|L| \equiv 2 \bmod 8$, then $L' = L_3$ (this is defined since $|L| \geq 10$); if $|L| \equiv 4 \bmod 8$, $L' = L_2$; if $|L| \equiv 6 \bmod 8$, $L' = L_1$. Observe that in each case $|L'| \equiv 0 \bmod 8$, so $w_4(L') = 0$. 

\item[(b)] If $w(L) = 1$, define $L'$ as follows: if $|L| \equiv 0 \bmod 8$, $L' = L_2$; if $|L| \equiv 2 \bmod 8$, $L' = L_1$; if  $|L| \equiv 4 \bmod 8$, $L' = L$; if  $|L| \equiv 6 \bmod 8$, $L' = L_3$ (this walk is defined since $|L| \geq 6$). Observe that in each case $|L'| \equiv 4 \bmod 8$, so $w_4(L') = 0$. 
\end{enumerate}
In each case $L'$ fails to be $4$-repeating or $4$-odd and so, by Proposition~\ref{kgoodor2kodd}, $\mathcal{Q}^4(A) \not \subseteq \mathbf{P_0}$. 

Write $L = (X_{i_0} Y_{j_0} \cdots X_{i_{r-1}} Y_{j_{r-1}}X_{i_0})$ ($r \geq 2$) with all $i_k$ distinct and all $j_k$ distinct. Define $i_r = i_0$ and $B, C\in \mathcal{Q}_0(A)$ via:
\[
B_{pq} = \left\{\begin{array}{ll}\mathrm{sign}(A_{pq}) & \mbox{if  } p=i_k,q=j_k\,\, (k= 0, \ldots, r-1)\\0&\mbox{otherwise}\end{array}\right. 
\]
\[
C_{pq} = \left\{\begin{array}{ll}\mathrm{sign}(A_{pq}) & \mbox{if  } p=i_{k+1},q=j_k\,\, (k= 0, \ldots, r-1)\\0&\mbox{otherwise}\end{array}\right. 
\]
Multiplying gives:
\begin{eqnarray*}
(BC^{\mathrm{t}})_{pq} &=& \sum_s B_{ps}C_{qs}\\ &=& \left\{\begin{array}{ll}\mathrm{sign}(A_{i_kj_k}A_{i_{k+1}j_k}) & \mbox{if  } p=i_k,q=i_{k+1}\,\, (k= 0, \ldots, r-1)\\0&\mbox{otherwise.}\end{array}\right. 
\end{eqnarray*}
Since the $i_k$ are all distinct it follows, as in the proof of Theorem~\ref{main} that given nonempty $\alpha \subseteq \{1, \ldots, n\}$, the principal minor $(BC^{\mathrm{t}})[\alpha] = 0$ unless $\alpha = \alpha' \stackrel{\text{\tiny def}}{=} \{i_0, \ldots, i_{r-1}\}$, in which case, 
\[
(BC^{\mathrm{t}})[\alpha'] = (-1)^{r+1}\prod_{k=0}^{r-1}(BC^{\mathrm{t}})_{i_ki_{k+1}} = \pm 1\,.
\]
Thus the characteristic polynomial of $BC^{\mathrm{t}}$ is either $\lambda^{n-r}(\lambda^r - 1)$ or $\lambda^{n-r}(\lambda^r + 1)$. So the nonzero eigenvalues of $BC^{\mathrm{t}}$ are just the $r$th roots of $1$ or of $-1$, and since $r \geq 2$, these clearly cannot all lie on the nonnegative real axis. (In fact, if $r \geq 3$ then some of these must be nonreal.) Thus $\mathcal{Q}^2_0(A) \not \subseteq \mathbf{PS}$, and by Remark~\ref{remP0PSclosed}, $\mathcal{Q}^2(A) \not \subseteq \mathbf{PS}$.
\qquad \qed \end{pot2}

\section{Proof of Theorem~\ref{thm_main_caterpillar}}

Before we prove Theorem~\ref{thm_main_caterpillar}, we will show that $\mathcal{Q}^{2r}(A) \subseteq \mathbf{P_0}$ for all $r \in \mathbb{N}$ implies $\mathcal{Q}^{2r}(A) \subseteq \mathbf{PS}$ for all $r \in \mathbb{N}$. 

\begin{definition}
\label{defCsets}
Define for $n,k \in \mathbb{N}$ the following subsets of $\mathbb{C}$.  
\[
\begin{array}{rcl}
F(n) &=& \{z=\rho e^{i\theta} \in \mathbb{C}\colon \rho > 0, |\theta - \pi| < \pi/n\}\\
F_k(n) &=& \{z\in \mathbb{C} \colon z^k \in F(n)\}\\
\mathbb{C}_n' &=& \mathbb{C} \backslash \bigcup_{k \in \mathbb{N}}F_k(n)\\
\mathbb{C}^\mathbb{Q} &=& \{z = \rho e^{2\pi i\theta}\colon \theta \in \mathbb{Q}\}
\end{array}
\]
\end{definition}

\begin{rem}
$F_k(n)$ ($k \geq 2$) is the $k$th preimage of $F(n)$, and $\mathbb{C}_n'$ is the complex plane with $F(n)$ and all its preimages removed. Note that there are in general nonreal elements in $\mathbb{C}_n'$: e.g., if $z = e^{2\pi i/3}$ then $z^k \not \in F(4)$ for any $k > 0$. More generally, with $\theta = 2\pi/(2m+1)$, the set $\{e^{i\theta}, e^{2i\theta}\,\ldots\}$ misses $F(n)$ if $n>2m+1$. 
\end{rem}

We collect together some results and observations involving the sets in Definition~\ref{defCsets}:
\begin{lemma}
\label{lemunionFs}
Let $A \in \mathbb{R}^{n \times n}$. Then 
\begin{enumerate}
\item If $A^k  \in \mathbf{P_0}$ for some $k \in \mathbb{N}$, then $\mathrm{spec}\,A \cap F_k(n) = \emptyset$.
\label{P0spec}
\item If $A^k \in \mathbf{P_0}$ for all $k \in \mathbb{N}$, then $\mathrm{spec}\,A \subseteq \mathbb{C}_n'$.
\item $\mathbb{R}_{\geq 0} \subseteq \mathbb{C}_n'\subseteq \mathbb{C}^\mathbb{Q}$.
\label{CQclaim}
\end{enumerate}
\end{lemma}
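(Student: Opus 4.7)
The plan splits the lemma into its three parts, the first of which does the main work.

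For (1), the driving tool is Kellogg's theorem on $P_0$-matrix spectra: the eigenvalues of an $n \times n$ $P_0$-matrix $M$ avoid the open wedge $|\arg z - \pi| < \pi/n$, i.e.\ $\mathrm{spec}\,M \cap F(n) = \emptyset$. (One cites this; it is a standard consequence of the nonnegativity of principal minors, proved by an eigenvalue-localisation argument using the characteristic polynomial coefficients.) Given $A$ with $A^k \in \mathbf{P_0}$, I use the spectral mapping theorem $\mathrm{spec}(A^k) = \{\lambda^k : \lambda \in \mathrm{spec}\,A\}$. If $\lambda \in \mathrm{spec}\,A \cap F_k(n)$, then by definition of $F_k(n)$ we have $\lambda^k \in F(n)$, so $F(n)$ contains an eigenvalue of $A^k$, contradicting Kellogg. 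Hence $\mathrm{spec}\,A \cap F_k(n) = \emptyset$. This is the step I expect to be the main obstacle, but only in the sense of having to invoke the right external result --- everything else is bookkeeping.

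Part (2) is then an immediate corollary: if $A^k \in \mathbf{P_0}$ for all $k$, then by (1) $\mathrm{spec}\,A$ misses $F_k(n)$ for every $k$, hence misses $\bigcup_k F_k(n)$, i.e.\ $\mathrm{spec}\,A \subseteq \mathbb{C}_n'$.

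For (3), first I show $\mathbb{R}_{\geq 0} \subseteq \mathbb{C}_n'$. If $\lambda \geq 0$ then for every $k \in \mathbb{N}$, $\lambda^k \geq 0$; since $F(n)$ requires $\rho > 0$ and $|\theta - \pi| < \pi/n < \pi$, no nonnegative real lies in $F(n)$, so $\lambda \notin F_k(n)$. For the second inclusion $\mathbb{C}_n' \subseteq \mathbb{C}^\mathbb{Q}$, I take the contrapositive: given $z = \rho e^{2\pi i \theta}$ with $\rho > 0$ and $\theta \notin \mathbb{Q}$, I need some $k$ with $z^k \in F(n)$. Writing $z^k = \rho^k e^{2\pi i k\theta}$, this reduces to finding $k \in \mathbb{N}$ with $k\theta \bmod 1 \in (1/2 - 1/(2n),\, 1/2 + 1/(2n))$. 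This follows from Weyl's equidistribution theorem (or the simpler Weyl--Kronecker density statement) for the irrational rotation $k \mapsto k\theta \bmod 1$: the orbit is dense in $[0,1)$ and so meets this nonempty open arc. The case $z = 0$ is handled separately: $0 = 0 \cdot e^{2\pi i \cdot 0}$ belongs trivially to $\mathbb{C}^\mathbb{Q}$, completing the inclusion.

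In summary, (1) is a direct application of Kellogg's theorem through the spectral mapping formula, (2) is a union, and (3) combines the geometry of the wedge $F(n)$ with the density of irrational rotations. The only substantive ingredient is Kellogg's bound on $P_0$-spectra.
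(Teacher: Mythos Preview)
Your proof is correct and follows essentially the same route as the paper: invoke Kellogg's theorem for part~(1) and pass to $A^k$ via the spectral mapping, deduce part~(2) by taking the union over $k$, and prove part~(3) using that $z^k \in \mathbb{R}_{\geq 0}$ for $z \geq 0$ together with density of the irrational rotation on the circle. The only cosmetic difference is that you phrase the density step via Weyl/Kronecker whereas the paper simply appeals to the well-known density of $\{e^{2\pi i k\theta}\}$ for irrational $\theta$.
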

\begin{proof}
The first claim with $k=1$ is proved by Kellogg \cite{kellogg}: in fact Kellogg proves that $\lambda \in \mathbb{C}$ is an eigenvalue of an $n \times n$ $P_0$-matrix if and only if $\lambda \not \in F(n)$. The case of general $k$ follows immediately as the eigenvalues of $A^k$ are just the $k$th powers of those of $A$. The second claim is immediate from the first. To verify the final claim observe that: (i) if $z \in \mathbb{R}_{\geq 0}$, then $z^k \in \mathbb{R}_{\geq 0}$ for all $k$ and hence $\mathbb{R}_{\geq 0} \subseteq \mathbb{C}_n'$; (ii) if $z = \rho e^{2\pi i\theta}$ where $\rho > 0$ and $\theta \not \in \mathbb{Q}$, then as is well known
\[
\{e^{2\pi i\theta}, e^{4\pi i\theta}, e^{6\pi i\theta}, \ldots\}
\]
is dense on the unit circle, and so there exists $k\in \mathbb{N}$ such that $z^k \in F(n)$. Thus $\mathbb{C}_n'\subseteq \mathbb{C}^\mathbb{Q}$.
\qquad \qed \end{proof}

\begin{thm}
\label{allpowersP_0}
Let $A \in \mathbb{R}^{n \times m}$. If $\mathcal{Q}^{2r}(A) \subseteq \mathbf{P_0}$ for each $r \in \mathbb{N}$ then $\mathcal{Q}^{2r}(A) \subseteq \mathbf{PS}$ for each $r \in \mathbb{N}$.
\end{thm}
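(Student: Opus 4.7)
Fix $r \in \mathbb{N}$ and $M \in \mathcal{Q}^{2r}(A)$, writing $A \in \mathbb{R}^{n \times m}$ with $s := \mathrm{rank}\,A$. The plan is to connect $M$ via a continuous path in $\mathcal{Q}^{2r}(A)$ to the positive semidefinite reference matrix $M_0 := (AA^{\mathrm{t}})^r$, using a spectral gap argument to show that nonzero eigenvalues cannot leave the positive real axis along the way. Preparatory observations: the $r=1$ case of the hypothesis combined with Theorem~\ref{thmtwoprodP0} forces $\myd{\Gamma}_A$ to be $2$-odd, so Lemma~\ref{lemoddrank} yields that every element of $\mathcal{Q}^{2r}(A)$ has exactly $n-s$ zero eigenvalues; for any $M' \in \mathcal{Q}^{2r}(A)$ and $k \in \mathbb{N}$, $(M')^k \in \mathcal{Q}^{2rk}(A) \subseteq \mathbf{P_0}$ by hypothesis, so Lemma~\ref{lemunionFs} gives $\mathrm{spec}(M') \subseteq \mathbb{C}_n'$; and $M_0$ is clearly positive semidefinite as a power of $AA^{\mathrm{t}}$. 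By Lemma~\ref{lempathcon} pick a continuous path $\gamma \colon [0,1] \to \mathcal{Q}^{2r}(A)$ with $\gamma(0) = M_0$ and $\gamma(1) = M$.

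The crucial topological input is that the positive real ray $\mathbb{R}_{>0}$ is separated from the rest of $\mathbb{C}_n' \setminus \{0\}$ by a uniform positive angular gap. By Lemma~\ref{lemunionFs}(3), points of $\mathbb{C}_n' \setminus \{0\}$ have arguments in $2\pi\mathbb{Q}$; and a direct elementary analysis of when the orbit $\{k\theta \bmod 1\}_{k \geq 0}$ of a reduced rational $\theta = p/q$ avoids the open $(1/n)$-neighbourhood of $1/2$ shows that the admissible $q$ must be odd and satisfy $q \leq n$. Therefore the argument set $\Theta_n := (2\pi)^{-1}\arg(\mathbb{C}_n' \setminus \{0\})$ is in fact \emph{finite}, and hence $\epsilon_n := \min\{|\theta| \colon \theta \in \Theta_n \setminus \{0\}\} > 0$.

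Set $S := \gamma^{-1}(\mathbf{PS}) \subseteq [0,1]$. Closedness of $S$ is immediate from continuity of $\gamma$ together with closedness of $\mathbf{PS}$ (Remark~\ref{remP0PSclosed}). For openness, suppose $t_0 \in S$: then $\gamma(t_0)$ has $s$ nonzero eigenvalues in $\mathbb{R}_{>0}$, and by standard continuity of the eigenvalue multiset in the matrix, nearby $\gamma(t)$ have their $s$ nonzero eigenvalues in small balls around these positive reals. Since these eigenvalues are constrained to lie in $\mathbb{C}_n'$, and $\mathbb{C}_n' \setminus \mathbb{R}_{\geq 0}$ consists of rays at angular distance at least $2\pi\epsilon_n$ from $\mathbb{R}_{>0}$, the balls can be chosen small enough so that the nonzero eigenvalues of $\gamma(t)$ must lie on $\mathbb{R}_{>0}$ itself. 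Thus $S$ is clopen in $[0,1]$; since $0 \in S$, we conclude $S = [0,1]$, and so $M = \gamma(1) \in \mathbf{PS}$.

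The principal obstacle is the topological claim that $\Theta_n$ is finite (which supplies the uniform gap $\epsilon_n > 0$): this is the substantive new ingredient beyond what is furnished by the previous lemmas, and its proof reduces to the elementary analysis of rational orbits of $\theta \mapsto k\theta \bmod 1$ indicated above.
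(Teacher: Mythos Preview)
Your proof is correct and shares the paper's overall skeleton: both invoke Theorem~\ref{thmtwoprodP0} to get that $\myd{\Gamma}_A$ is $2$-odd, Lemma~\ref{lemoddrank} to fix the number of zero eigenvalues, Lemma~\ref{lemunionFs} to confine spectra to $\mathbb{C}_n'$, and Lemma~\ref{lempathcon} to connect an arbitrary $M\in\mathcal{Q}^{2r}(A)$ to the positive semidefinite $(AA^{\mathrm{t}})^r$.

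The two arguments diverge only in the final continuity step. The paper uses merely the inclusion $\mathbb{C}_n'\subseteq\mathbb{C}^{\mathbb{Q}}$ from Lemma~\ref{lemunionFs}(\ref{CQclaim}): since the nonzero spectrum varies continuously and goes from ``not all real'' to ``all real positive'' along the path, an intermediate-value argument forces some $\gamma(t')$ to have a nonzero eigenvalue whose argument is an irrational multiple of $2\pi$, contradicting $\mathrm{spec}\,\mathcal{A}\subseteq\mathbb{C}^{\mathbb{Q}}$. You instead prove the sharper structural fact that the argument set $\Theta_n$ is \emph{finite} (odd denominators $q\le n$), obtain a uniform angular gap $\epsilon_n>0$, and run a clopen argument on $\gamma^{-1}(\mathbf{PS})$. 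Your route buys a clean open-and-closed proof with no need to manufacture a continuous scalar ``maximum argument'' function on the eigenvalue multiset; the paper's route buys economy, since countability of admissible arguments is already supplied by Lemma~\ref{lemunionFs} and no further analysis of $\mathbb{C}_n'$ is required. In particular, what you flag as the ``principal obstacle'' --- finiteness of $\Theta_n$ --- is in fact stronger than what the paper needs.
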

\begin{proof}
Suppose $\mathcal{Q}^{2r}(A) \subseteq \mathbf{P_0}$ for each $r \in \mathbb{N}$. Since in particular $\mathcal{Q}^{2}(A) \subseteq \mathbf{P_0}$, by Theorem~\ref{thmtwoprodP0}, $\myd{\Gamma}_A$ is 2-odd. Fix $r \in \mathbb{N}$ and for brevity let $\mathcal{A} \stackrel{\text{\tiny def}}{=} \mathcal{Q}^{2r}(A) \subseteq \mathbb{R}^{n \times n}$. Given $B \in \mathcal{A}$ and $k \in \mathbb{N}$, $B^k \in \mathcal{Q}^{2kr}(A) \subseteq \mathbf{P_0}$ and so, by Lemma~\ref{lemunionFs}, $\mathrm{Spec}\,\mathcal{A} \subseteq \mathbb{C}_n' \subseteq \mathbb{C}^\mathbb{Q}$. 

Now $\mathcal{A} \cap \mathbf{PS}$ is nonempty as it includes, for example, $C \stackrel{\text{\tiny def}}{=} (AA^{\mathrm{t}})^r$. If $\mathcal{A} \not \subseteq \mathbf{PS}$, then there exists $B \in \mathcal{A}$ with nonreal eigenvalue $\lambda$ (since $\mathrm{Spec}\,\mathcal{A} \subseteq \mathbb{C}_n'$). Choose continuous $\gamma\colon [0,1] \to \mathcal{A}$ such that $\gamma(0) = B$ and $\gamma(1) = C$, possible as $\mathcal{A}$ is path-connected (Lemma~\ref{lempathcon}). As all matrices in $\mathrm{im}(\gamma)$ have exactly $n -\mathrm{rank}(A)$ zero eigenvalues (Lemma~\ref{lemoddrank}), by the continuous dependence of the (nonzero) spectrum of $\gamma(t)$ on $t$, there must be some $t' \in [0,1]$ such that $\gamma(t')$ has a nonzero eigenvalue $z=\rho e^{i\phi}$ with $\rho > 0$ and $\phi$ an irrational multiple of $2\pi$, contradicting the fact that $\mathrm{spec}(\mathcal{A}) \subseteq \mathbb{C}^\mathbb{Q}$. Thus $\mathcal{A} \subseteq \mathbf{PS}$, and since $r$ was arbitrary, $\mathcal{Q}^{2r}(A) \subseteq \mathbf{PS}$ for each $r \in \mathbb{N}$. \qquad \qed \end{proof}

\begin{prop}
\label{inductivefailP0}
If $\mathcal{Q}^{2r}(A) \subseteq \mathbf{P_0}$ for some $r \in \mathbb{N}$, then $\mathcal{Q}^{2s}(A) \subseteq \mathbf{P_0}$ for all $1 \leq s \leq r$. 
\end{prop}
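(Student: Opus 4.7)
The plan is to prove the contrapositive: if $\mathcal{Q}^{2s}(A) \not\subseteq \mathbf{P_0}$ for some $s \in \mathbb{N}$, then $\mathcal{Q}^{2r}(A) \not\subseteq \mathbf{P_0}$ for every $r \geq s$. I would do this by a straightforward induction on $r - s$, reusing the walk-inflation construction already set up in Proposition~\ref{propinherit} and the weight bookkeeping recorded in Remark~\ref{reminherit}, converting between matrices and bipartite graphs via Proposition~\ref{kgoodor2kodd}.

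First I would apply Proposition~\ref{kgoodor2kodd} to turn the assumption $\mathcal{Q}^{2s}(A) \not\subseteq \mathbf{P_0}$ into a combinatorial statement: there exists a closed walk $W$ of length $2js$ on $\myd{\Gamma}_A$ (for some $j \in \mathbb{N}$) that is simultaneously not $2s$-repeating and not $2s$-odd. I would then feed $W$ into the insertion procedure from the proof of Proposition~\ref{propinherit} --- duplicating the pair of vertices immediately before each of the $j$ "block boundaries" of $W$ --- to obtain a closed walk $\tilde{W}$ of length $2j(s+1)$ on $\myd{\Gamma}_A$ that fails to be $2(s+1)$-repeating.

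The crucial point, and the reason Remark~\ref{reminherit} is exactly the right tool, is that each inserted segment traverses a single edge twice and so contributes $0$ to the weight modulo $2$; thus $w(\tilde{W}) = w(W)$, and since $|\tilde{W}|/(2(s+1)) = j = |W|/(2s)$, the walk $\tilde{W}$ is $2(s+1)$-odd iff $W$ is $2s$-odd. Consequently $\tilde{W}$ is also not $2(s+1)$-odd. Iterating the construction $r - s$ times produces a closed walk of length $2jr$ on $\myd{\Gamma}_A$ that is neither $2r$-repeating nor $2r$-odd, and a second appeal to Proposition~\ref{kgoodor2kodd} yields $\mathcal{Q}^{2r}(A) \not\subseteq \mathbf{P_0}$, completing the contrapositive.

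I do not expect a genuine obstacle here: the combinatorial heavy lifting was already performed in Proposition~\ref{propinherit}, and the parenthetical Remark~\ref{reminherit} was evidently inserted with precisely this kind of corollary in mind. The only mild care required is to verify that the inflation step preserves \emph{both} failure conditions (failure to repeat \emph{and} failure to be odd) simultaneously, and both are immediate from the cited statements.
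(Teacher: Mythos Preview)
Your proposal is correct and follows essentially the same route as the paper's own proof: both argue the contrapositive, invoke Proposition~\ref{kgoodor2kodd} to extract a closed walk on $\myd{\Gamma}_A$ that is neither $2s$-repeating nor $2s$-odd, then use the inflation construction of Proposition~\ref{propinherit} together with Remark~\ref{reminherit} to step from $s$ to $s+1$, and finally re-apply Proposition~\ref{kgoodor2kodd}. The only cosmetic difference is that you spell out the iteration $r-s$ times explicitly, whereas the paper leaves the induction implicit after the single-step argument.
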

\begin{proof}
We prove the contrapositive. Suppose $\mathcal{Q}^{2s}(A) \not \subseteq \mathbf{P_0}$. By Proposition~\ref{kgoodor2kodd}, there exist $j \geq 2$ and a closed walk $W$ of length $2js$ in $\myd{\Gamma}_A$ which fails to be either $2s$-repeating or $2s$-odd. By Proposition~\ref{propinherit} and Remark~\ref{reminherit} there exists a closed walk of length $2j(s+1)$ on $\myd{\Gamma}_A$ which fails to be either $2(s+1)$-repeating or $2(s+1)$-odd. Applying Proposition~\ref{kgoodor2kodd} again, $\mathcal{Q}^{2(s+1)}(A) \not \subseteq \mathbf{P_0}$. 
\qquad \qed \end{proof}

We are now able to prove Theorem~\ref{thm_main_caterpillar}, restated here for readability.

\begin{center}
\fbox{%
\parbox{8cm}{%
{\bf Theorem~\ref{thm_main_caterpillar}.} The following are equivalent:
\begin{enumerate}
\item[(a)] $\Gamma_A$ is a caterpillar forest.
\item[(b)] $\mathcal{Q}^6(A) \subseteq \mathbf{P_0}$.
\item[(c)] $\mathcal{Q}^4(A) \subseteq \mathbf{PS}$.
\item[(d)] $\mathcal{Q}^{2k}(A) \subseteq \mathbf{P_0}$ for all $k \in \mathbb{N}$.
\item[(e)] $\mathcal{Q}^{2k}(A) \subseteq \mathbf{PS}$ for all $k \in \mathbb{N}$.
\end{enumerate}
}}
\end{center}

\begin{pot3}

We follow the scheme:
\begin{tikzpicture}[baseline=2.8ex, domain=0:4,scale=0.25]
\node at (0.5,2) {$(b)$};
\node[rotate=30] at (2.2,3) {$\Rightarrow$};
\node at (1.7,3.7) {$\scriptstyle{3}$};
\node at (4,4) {$(a)$};
\node at (6,4) {$\Leftarrow$};
\node at (6,4.9) {$\scriptstyle{6}$};
\node at (4,2) {$\Downarrow$};
\node at (4.7,2) {$\scriptstyle{1}$};
\node at (8,4) {$(c)$};
\node at (4,0) {$(d)$};
\node at (6,0) {$\Rightarrow$};
\node at (6,0.9) {$\scriptstyle{4}$};
\node at (8,0) {$(e)$};
\node at (8,2) {$\Uparrow$};
\node at (8.7,2) {$\scriptstyle{5}$};
\node[rotate=60] at (2.2,1) {$\Uparrow$};
\node at (1.8,0.2) {$\scriptstyle{2}$};
\end{tikzpicture}

1) $\Gamma_A$ is a caterpillar forest $\Rightarrow$ $\mathcal{Q}^{2k}(A) \subseteq \mathbf{P_0}$ for all $k \in \mathbb{N}$. Suppose $\Gamma_A$ is a caterpillar forest. Then every closed walk on $\myd{\Gamma}_A$ has weight $0$ (Remark~\ref{remclosedweight}). Moreover $\Gamma_A$ is $2k$-repeating for each $k \in \mathbb{N}$ by Proposition~\ref{onlycatssgood}. By Proposition~\ref{P_0prods}, $\mathcal{Q}^{2k}(A) \subseteq \mathbf{P_0}$ for all $k \in \mathbb{N}$.

2) If $\mathcal{Q}^{2k}(A) \subseteq \mathbf{P_0}$ for all $k \in \mathbb{N}$, then trivially $\mathcal{Q}^6(A) \subseteq \mathbf{P_0}$.

3) $\mathcal{Q}^6(A) \subseteq \mathbf{P_0}$ $\Rightarrow$ $\Gamma_A$ is a caterpillar forest. Suppose $\Gamma_A$ fails to be a caterpillar forest; then either (i) it fails to be a forest, or (ii) it includes a subgraph isomorphic to $T^*$. In case (i) by Theorem~\ref{thm_main_forest}, $\mathcal{Q}^4(A) \not \subseteq \mathbf{P_0}$ and so, by Proposition~\ref{inductivefailP0}, $\mathcal{Q}^6(A) \not \subseteq \mathbf{P_0}$. In case (ii) $A$ has a submatrix, say $B$, such that $\Gamma_B \cong T^*$. Define the walk $W$ as in the proof of Lemma~\ref{notall6rep}: as $W$ fails to be $6$-repeating or $6$-odd, by Proposition~\ref{kgoodor2kodd} $\mathcal{Q}^6(B) \not \subseteq \mathbf{P_0}$. By Proposition~\ref{propnotP0PS}, $\mathcal{Q}^6(A) \not \subseteq \mathbf{P_0}$. 

4) If $\mathcal{Q}^{2k}(A)\subseteq \mathbf{P_0}$ for all $k \in \mathbb{N}$ then, by Theorem~\ref{allpowersP_0}, $\mathcal{Q}^{2k}(A)\subseteq \mathbf{PS}$ for all $k \in \mathbb{N}$.

5) If $\mathcal{Q}^{2k}(A)\subseteq \mathbf{PS}$ for all $k \in \mathbb{N}$, then trivially $\mathcal{Q}^{4}(A)\subseteq \mathbf{PS}$. 

6) Finally, if $\mathcal{Q}^{4}(A)\subseteq \mathbf{PS}$, then $\Gamma_A$ is a caterpillar forest. Suppose $\Gamma_A$ fails to be a caterpillar forest. Either (i) $\Gamma_A$ is not a forest, in which case, by Theorem~\ref{thm_main_forest}, $\mathcal{Q}^4(A) \not \subseteq \mathbf{P_0}$, and hence, by Proposition~\ref{propnotP0notPS} $\mathcal{Q}^{4}(A) \not \subseteq \mathbf{PS}$; or (ii) $A$ has a submatrix, say $B$, such that $\Gamma_B \cong T^*$. It is then easy to find $B_1,B_2,B_3,B_4 \in \mathcal{Q}_0(B)$ such that $B_1B_2^{\mathrm{t}}B_3B_4^{\mathrm{t}} \not \in \mathbf{PS}$ (see Example~\ref{exprod} below), and thus $\mathcal{Q}^4_0(B) \not \subseteq \mathbf{PS}$. By Remark~\ref{remP0PSclosed}, $\mathcal{Q}^4(B) \not \subseteq \mathbf{PS}$ and by Proposition~\ref{propnotP0PS}, $\mathcal{Q}^4(A) \not \subseteq \mathbf{PS}$. 
\qquad \qed \end{pot3}

\begin{example}
\label{exprod}
Define 
\[
B=\left(\begin{array}{ccc}1&0&0\\1&1&1\\0&1&0\\0&0&1\end{array}\right)
\]
so that $\Gamma_B \cong T^*$, and define 
\[
J = \left(\begin{array}{ccc}1&0&0\\1&0&1\\0&1&0\\0&0&0\end{array}\right)
\left(\begin{array}{cccc}1&1&0&0\\0&0&1&0\\0&0&0&1\end{array}\right)
\left(\begin{array}{ccc}0&0&0\\1&1&0\\0&1&0\\0&0&1\end{array}\right)
\left(\begin{array}{cccc}1&0&0&0\\0&1&1&0\\0&1&0&0\end{array}\right) \in \mathcal{Q}^4_0(B)\,.
\]
$J$ has characteristic polynomial $\lambda(\lambda^3-4\lambda^2+3\lambda-1)$ which can easily be computed to have a pair of nonreal roots, using, for example, the implementation of Sturm's theorem in MAXIMA \cite{maxima}. 
\end{example}

\section{Conclusions}

A number of relationships have been presented between the graphs associated with a real matrix $A$ and the products $\mathcal{Q}^k(A)$. Some of the results seem rather surprising, for example, the claim that the apparently weaker condition $\mathcal{Q}^4(A) \subseteq \mathbf{PS}$ implies in fact that $\mathcal{Q}^{2k}(A) \subseteq \mathbf{PS}$ for all $k \in \mathbb{N}$. 

As discussed in the introductory section, the results here have connections with the study of chemical reaction networks. For example, Theorem~\ref{thmtwoprodP0}, which can be derived from results in \cite{banajicraciun}, is related to the question of which chemical reaction systems are incapable of multiple steady states: via results in \cite{gale}, for example, and with some mild additional assumptions, it implies that chemical systems with $2$-odd SR graphs are incapable of multiple equilibria. In this spirit, again with some additional assumptions, a consequence of Theorem~\ref{thm_main_forest} is that chemical systems with acyclic SR graphs have a unique equilibrium which is locally stable. We are unaware of any immediate applications of Theorem~\ref{thm_main_caterpillar}, but mathematically it is the natural next claim after that of Theorem~\ref{thm_main_forest}. Sharper graph-theoretic results are also available, involving more complicated computations on the SR graph and related bipartite graphs. 

\section*{Acknowledgements}
MB's work was supported by EPSRC grant EP/J008826/1 ``Stability and order preservation in chemical reaction networks''. 

\bibliographystyle{unsrt}

\end{document}